\tikzset{
	vertex/.style={circle,draw,minimum size=1.5em},
	edge/.style={->,> = latex'}
}
\newcommand{\R}{\mathbb{R}}
\newcommand{\N}{\mathbb{N}}
\newcommand{\Z}{\mathbb{Z}}
\newcommand{\Fp}{\mathbb{F}_p}
\newtheorem{theorem}{Theorem}[section]
\newtheorem{corollary}[theorem]{Corollary}
\newtheorem{lemma}[theorem]{Lemma}
\newtheorem{prop}[theorem]{Proposition}
\theoremstyle{definition}
\newtheorem{definition}[theorem]{Definition}
\newtheorem{example}[theorem]{Example}
\newtheorem{remark}[theorem]{Remark}
\title{Arithmetic of idempotents in $\Z / m\Z$}
\author{Kelly Isham and Laura Monroe \thanks{This publication is unclassified and has been assigned LANL identifier LA-UR-20-23432. A portion of this work was performed at the Ultrascale Systems Research Center (USRC) at Los Alamos National Laboratory, supported by the U.S. Department of Energy contract DE-FC02-06ER25750. The first author was supported in part by an appointment with the National Science Foundation (NSF) Mathematical Sciences Graduate Internship (MSGI) Program sponsored by the NSF Division of Mathematical Sciences. }}
\date{}
\begin{document}

	\maketitle
	\begin{abstract}
		Idempotent elements are a well-studied part of ring theory, with several identities of the idempotents in $\Z / m\Z$ already known. Although the idempotents are not closed under addition, there are still interesting additive identities that can be derived and used.
		
		In this paper, we give several new identities on idempotents in $\Z / m\Z$. 
		We relate finite sublattices over $\Z / k\Z$ for all integers $k$ to an infinite lattice that is embedded in the divisibility lattice on $\N$ and to each other as sublattices of this infinite lattice. Using this relation, we generalize several identities on idempotents in $\Z/ m\Z$ to those involving idempotents related to these finite sublattices.
		
		Finally, as an application of the above idempotent identities, we derive an algorithm for calculating modular exponentiation over $\Z/ m\Z$.
		
	\end{abstract}

\section{Introduction}
An element $d$ in a ring $R$ is idempotent if $d^2 = d$. In 1882, Peirce published \cite{peirce}, which demonstrated the important role that idempotent and nilpotent elements play in ring theory. Since then, researchers have tried to understand properties of idempotents in both commutative and noncommutative rings. Many important results involve sets of idempotents that are \emph{primitive} and \emph{orthogonal}. An idempotent $d$ is primitive if $dR$ is indecomposable. A pair $\{d,e\}$ of idempotents are orthogonal if $de = 0$ in $R$. In this paper, we focus on idempotents in $R = \Z/ m\Z$. We give more attention to the idempotents that are not orthogonal or primitive.

While additive identities involving the primitive orthogonal idempotents modulo $m$ are known (see for example the summary paper on idempotents \cite{porubsky}), we have not found many identities involving arbitrary idempotents in the literature. In this paper, we provide several new identities. Further, we establish a connection between the infinite divisibility lattice and idempotents by defining \emph{consistent} sublattices. We generalize the idempotent identities modulo $m$ to identities involving the elements of the consistent sublattices modulo $g$ where $g$ is the supremum of the given sublattice.

Idempotents play an important role in understanding exponentiation modulo $m$. Several authors including Hewitt and Zuckerman in \cite{hewitt} and Schwarz in \cite{schwarz} have studied the orbits $a, a^2, \ldots$ in $\Z / m\Z$ and have shown that eventually $a^k = d$ for some idempotent element $d \in \Z / m\Z$. In \cite{ish_comps}, we define the sequential power graph with vertex set $V = \Z/ m\Z$ and edges $(a,b)$ such that $a = c^i$ and $b =c^{i+1}$ for some $c \in \Z / m\Z$ and $i \in \N$. Also in \cite{ish_comps}, we show that much of the structure of this graph follows directly from the idempotents. In Section \ref{app_sect}, we use some additive identities on idempotents and results about the components of the sequential power graph to provide an algorithm based on the Chinese Remainder Theorem for computing $b^e \pmod{m}$, assuming the factorization of $m$ is known. We hope that the identities in this paper will be useful in other applications as well.

\section{Notation} \label{notation}

Throughout this paper, we assume that the factorization of $m$ is known. Theorem \ref{idem_struc} provides the structure of idempotents modulo $m$. Let $I = \{i_1, \ldots, i_s\}$. To introduce notation early on, this theorem shows that $d_I = a_I p_{i_1}^{e_{i_1}} \cdots p_{i_s}^{e_{i_s}}$ such that $d_I \equiv 1\pmod{p_j^{e_j}}$ for all $j \in R \setminus I$. The idempotent $d_I$ is determined by the set $I$.\\

We define certain notations that we will use throughout this paper.
\begin{itemize}
	\item $m = p_1^{e_1} \cdots p_r^{e_r}$ is the modulus. 
	\item $R = \{1, 2, \ldots, r\}$ is the set of all indices referring to the prime powers dividing $m$. 
	\item $I = \{i_1, \ldots, i_s\} \subset R$ defines the primes that divide the idempotent $d_I$.
	\begin{itemize}
		\item[$\centerdot$] $g_I=$ gcd$(d_I,m)= p_{i_1}^{e_{i_1}} \cdots p_{i_s}^{e_{i_s}}$ is the greatest common divisor of $d_I$ and $m$.
		\item[$\centerdot$]  $d_I = a_Ig_I$ is the idempotent corresponding to $I$, with $a_I$ relatively prime to $\frac{m}{g_I}=g_{R \setminus I}$.
	\end{itemize}
	\item $R \setminus I = \{i_{s+1}, \ldots, i_r\} \subset R$ defines the set dual to $I$ in the following sense
	\begin{itemize}
		\item[$\centerdot$] $g_{R \setminus I}=$ gcd$(d_{R \setminus I},m)= p_{i_{s+1}}^{e_{i_{s+1}}} \cdots p_{i_{r}}^{e_{i_{r}}}$ is the greatest common divisor of $d_{R \setminus I}$ and $m$.
		\item[$\centerdot$] $d_{R \setminus I} = a_{R \setminus I}g_{R \setminus I}$ is the idempotent corresponding to $R \setminus I$, with $a_{R \setminus I}$ relatively prime to $\frac{m}{g_{R \setminus I}}=g_I$.
	\end{itemize}
	
\end{itemize}

\section{The idempotents of $\Z / m\Z$}
\label{idem_section_1}
\subsection{Known identities}

The idempotents of $\Z / m\Z$ have been well-studied. We begin with the structure theorem for idempotents modulo $m$. 

\begin{theorem}	\label{idem_struc}\cite[Theorem 2.2]{hewitt}  Let $I = \{i_1, \ldots, i_s\}\subseteq R$. Then the idempotents of $\Z/ m\Z$ are of the form $a_Ig_I \pmod{m}$ for any $a_I$ satisfying $a_I  g_I\equiv 1 \pmod{\frac{m}{g_I}}$ such that $\text{gcd}(a_I, \frac{m}{g_I}) =1.$
\end{theorem}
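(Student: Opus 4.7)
The plan is to reduce the problem to a component-wise analysis via the Chinese Remainder Theorem and then translate the resulting characterization back into the explicit form $a_I g_I$.

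First I would invoke CRT to obtain the isomorphism
\[
\Z/m\Z \;\cong\; \Z/p_1^{e_1}\Z \times \cdots \times \Z/p_r^{e_r}\Z,
\]
under which an element $d$ is idempotent if and only if its image in each factor is idempotent. The next step is to observe that in each local ring $\Z/p^e\Z$ the only idempotents are $0$ and $1$: if $x^2 \equiv x \pmod{p^e}$ then $p^e \mid x(x-1)$, and since $\gcd(x,x-1)=1$ the prime power $p^e$ must divide exactly one of the factors. Therefore the idempotents of $\Z/m\Z$ are in bijection with subsets $I \subseteq R$, the subset $I$ indexing those coordinates where the idempotent vanishes.

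Given $I = \{i_1,\dots,i_s\}$, I would then unpack the component-wise description. Requiring $d_I \equiv 0 \pmod{p_i^{e_i}}$ for every $i \in I$ is equivalent to $g_I \mid d_I$, so one can write $d_I = a_I g_I$ for some integer $a_I$. The remaining conditions $d_I \equiv 1 \pmod{p_j^{e_j}}$ for $j \in R\setminus I$ combine, via CRT applied to the coprime factorization $m/g_I = g_{R\setminus I}$, into the single congruence $a_I g_I \equiv 1 \pmod{m/g_I}$. Since $\gcd(g_I, m/g_I)=1$, this congruence has a unique solution for $a_I$ modulo $m/g_I$, and any such $a_I$ is automatically a unit mod $m/g_I$, giving the coprimality condition $\gcd(a_I,m/g_I)=1$ for free.

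Finally I would check the converse direction: any element of the form $a_I g_I$ satisfying the stated congruence is automatically idempotent, since $(a_I g_I)^2 - a_I g_I = a_I g_I(a_I g_I - 1)$, and by construction $g_I$ divides the first factor while $m/g_I$ divides the second, so the product is divisible by $m$. I do not anticipate a real obstacle here; the only bookkeeping step that requires care is making sure the two pieces of information ($g_I \mid d_I$ and $d_I \equiv 1 \pmod{m/g_I}$) together capture every component-wise condition, which is exactly where the coprimality of $g_I$ and $m/g_I$ is used.
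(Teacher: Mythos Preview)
Your argument is correct and is the standard CRT-based proof of this classical fact. The paper itself does not supply a proof of this theorem at all: it is stated with a citation to \cite[Theorem~2.2]{hewitt} and immediately followed by its corollary, so there is no in-paper argument to compare against. Your approach is exactly the one that underlies the cited result, so nothing further is needed.
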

\begin{corollary}\cite[Theorem 2.2]{hewitt}
	There are $2^r$ idempotent elements in $\Z / m\Z$.
	\label{num_idem}
\end{corollary}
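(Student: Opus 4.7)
The plan is to deduce the count directly from Theorem \ref{idem_struc} by showing that the map $I \mapsto d_I$ is a bijection between the subsets of $R = \{1, \ldots, r\}$ and the idempotents of $\Z/m\Z$. Since $|R| = r$, this yields exactly $2^r$ idempotents.

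First, I would argue existence and uniqueness of $d_I$ for each fixed $I \subseteq R$. Given $I$, the integer $g_I = \prod_{j \in I} p_j^{e_j}$ is coprime to $m/g_I = g_{R \setminus I}$. Hence $g_I$ is a unit in $\Z/(m/g_I)\Z$, so there is a unique residue $a_I \pmod{m/g_I}$ with $\gcd(a_I, m/g_I) = 1$ satisfying $a_I g_I \equiv 1 \pmod{m/g_I}$. Lifting $a_I$ to an integer and reducing $a_I g_I \pmod m$ produces a well-defined element $d_I$. By Theorem \ref{idem_struc}, $d_I$ is an idempotent, and every idempotent arises this way for some $I$.

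Next, I would show that distinct subsets give distinct idempotents, so that no overcounting occurs. The key observation is that $g_I = \gcd(d_I, m)$ by definition, and this gcd records exactly which prime powers $p_j^{e_j}$ divide $d_I$, namely those with $j \in I$. Thus $I$ is recovered from $d_I$ as $\{j \in R : p_j \mid d_I\}$, so the assignment $I \mapsto d_I$ is injective. Combined with the surjection from Theorem \ref{idem_struc}, we have a bijection.

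The only mild subtlety is confirming that the lifts $a_I$ with different representatives all reduce to the same residue $d_I \pmod m$; this is immediate because any two valid choices of $a_I$ differ by a multiple of $m/g_I$, and multiplying by $g_I$ then gives a difference that is a multiple of $m$. With that in hand, the count of subsets of an $r$-element set finishes the proof: $|\{I : I \subseteq R\}| = 2^r$.
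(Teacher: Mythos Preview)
Your proof is correct and follows the same route the paper implicitly relies on: the paper states this result as an immediate corollary of Theorem~\ref{idem_struc} (citing \cite{hewitt}) without giving a separate argument, and your bijection $I \mapsto d_I$ between subsets of $R$ and idempotents is precisely the content of that structure theorem made explicit. The only addition you supply is the careful verification of injectivity via $g_I = \gcd(d_I, m)$ and well-definedness of the lift, both of which are routine and correctly handled.
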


It is well-known (see e.g. \cite{porubsky}) that $d_{R \setminus \{i\}}$ for $i = 1, \ldots, r$ are the primitive orthogonal idempotents of $\Z / m\Z$. The following proposition follows immediately from this fact.
\begin{prop} \cite[Proposition 41]{peirce}, \cite[Page 1237]{porubsky}
	The direct sum $d_{R \setminus \{1\}}(\Z / m\Z)^\times \times \cdots \times d_{R \setminus \{r\}}(\Z / m\Z)^\times $ is the Peirce decomposition of the ring $\Z / m\Z$, up to order of idempotents. 
\end{prop}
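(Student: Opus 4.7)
The plan is to invoke the general ring-theoretic Peirce decomposition once the $d_{R \setminus \{i\}}$ have been confirmed to form a complete system of primitive orthogonal idempotents of $\Z / m\Z$. Recall the classical statement: in any commutative ring $S$, a family $\{e_1, \dots, e_r\}$ satisfying $e_i^2 = e_i$, $e_i e_j = 0$ for $i \ne j$, $\sum_i e_i = 1$, and each $e_i S$ indecomposable yields a direct sum decomposition $S = e_1 S \oplus \cdots \oplus e_r S$ as rings, with identity $e_i$ in the $i$-th summand, and this decomposition is unique up to the labeling of the idempotents.

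First I would verify the hypotheses for the family $\{d_{R\setminus\{i\}}\}_{i=1}^r$. Primitivity and pairwise orthogonality are granted by the quoted result from \cite{porubsky} stated just before the proposition, so the only remaining condition is completeness, i.e. $\sum_{i=1}^{r} d_{R\setminus\{i\}} \equiv 1 \pmod{m}$. This is immediate from Theorem \ref{idem_struc} together with the Chinese Remainder Theorem: by construction $d_{R\setminus\{i\}} \equiv 1 \pmod{p_i^{e_i}}$ and $d_{R\setminus\{i\}} \equiv 0 \pmod{p_j^{e_j}}$ for $j \ne i$, so the sum reduces to $1$ modulo every prime-power factor of $m$ and hence equals $1$ in $\Z/m\Z$.

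Applying the Peirce decomposition then gives $\Z/m\Z = \bigoplus_{i=1}^{r} d_{R\setminus\{i\}}(\Z/m\Z)$. Passing through the CRT isomorphism $\Z/m\Z \cong \prod_i \Z/p_i^{e_i}\Z$, each summand $d_{R\setminus\{i\}}(\Z/m\Z)$ is carried onto the $i$-th factor, whose unit group is $(\Z/p_i^{e_i}\Z)^\times$; this is the sense in which the displayed product of $d_{R\setminus\{i\}}(\Z/m\Z)^\times$ reproduces the decomposition. The "up to order of idempotents" qualifier simply reflects the fact that choosing a different bijection between the indices $1, \dots, r$ and the prime-power factors of $m$ permutes the summands.

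There is no real obstacle here, since the proposition is essentially a specialization of the standard Peirce decomposition to $\Z/m\Z$, and the only nontrivial input is the identification of the $d_{R\setminus\{i\}}$ as the primitive orthogonal idempotents, which has already been cited. The sum-to-one identity is the single short calculation needed, and the rest is bookkeeping through CRT.
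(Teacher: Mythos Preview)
The paper does not supply its own proof of this proposition; it is stated with citations to \cite{peirce} and \cite{porubsky} and left without argument. Your proposal therefore cannot be compared against a proof in the paper, but it is a correct and standard justification: you verify that the $d_{R\setminus\{i\}}$ form a complete system of primitive orthogonal idempotents (primitivity and orthogonality being quoted, completeness checked via the CRT congruences from Theorem~\ref{idem_struc}), and then invoke the general Peirce decomposition. Note that the sum-to-one identity you derive is recorded in the paper separately as Proposition~\ref{idem_sum_1}, so you could also simply cite that. Nothing is missing from your argument.
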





Further, the idempotents under multiplication are well-understood. 

\begin{prop}\cite[Corollary 2.21]{hewitt}
	The idempotents of $\Z / m\Z$ form a monoid under multiplication.
\end{prop}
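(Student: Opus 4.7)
The plan is to verify the three monoid axioms (closure, associativity, identity) for the set of idempotents inside $\Z/m\Z$. Associativity is inherited for free from the ambient ring, so the only real content lies in closure and in identifying an identity element. Since $1^2 = 1$, the element $1 \in \Z/m\Z$ is itself an idempotent, and it serves as the multiplicative identity on any subset containing it; in our notation it is $d_R$. Thus only closure requires argument.

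For closure, the quickest route is purely formal: if $d$ and $e$ are idempotent in $\Z/m\Z$, then since the ring is commutative,
\[
(de)^2 = d^2 e^2 = d \cdot e = de,
\]
so $de$ is again idempotent. This already suffices for the statement of the proposition.

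To connect the proof to the notation established in Section \ref{notation} and to Theorem \ref{idem_struc}, I would then compute the product $d_I d_J$ in terms of the indexing sets. Using the characterization that $d_I \equiv 1 \pmod{p_j^{e_j}}$ for $j \in I$ and $d_I \equiv 0 \pmod{p_j^{e_j}}$ for $j \in R \setminus I$, we get for every $j \in R$
\[
d_I d_J \equiv \begin{cases} 1 \pmod{p_j^{e_j}} & \text{if } j \in I \cap J,\\ 0 \pmod{p_j^{e_j}} & \text{otherwise,}\end{cases}
\]
so by the Chinese Remainder Theorem and uniqueness of the idempotent associated to a subset of $R$, we have $d_I d_J = d_{I \cap J}$. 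This both re-proves closure and identifies the multiplicative monoid explicitly with $(2^R, \cap)$, with identity $d_R = 1$.

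There is no real obstacle here: the core identity $(de)^2 = de$ is immediate from commutativity of $\Z/m\Z$, and the only care needed is to spell out the identity element and confirm it is an idempotent. The richer computation $d_I d_J = d_{I \cap J}$ is optional but worthwhile, as it sets up the lattice-theoretic point of view that the paper will use in later sections when relating idempotents to sublattices of the divisibility lattice.
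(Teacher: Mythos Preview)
The paper does not supply its own proof of this proposition; it is simply cited from \cite{hewitt}. Your core argument---closure via $(de)^2 = d^2 e^2 = de$ in a commutative ring, associativity inherited from $\Z/m\Z$, and $1$ as the identity element---is correct and is the standard one-line verification.

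However, the ``optional'' part where you connect to the paper's notation contains errors stemming from a reversed convention. In this paper, $I \subseteq R$ indexes the primes that \emph{divide} $d_I$: one has $g_I = \prod_{i \in I} p_i^{e_i}$ and $d_I = a_I g_I$, so $d_I \equiv 0 \pmod{p_j^{e_j}}$ for $j \in I$ and $d_I \equiv 1 \pmod{p_j^{e_j}}$ for $j \in R \setminus I$ (see Theorem~\ref{idem_struc} and Section~\ref{notation}). You have this exactly backwards. Consequently:
\begin{itemize}
\item The multiplicative identity $1$ is $d_{\emptyset}$, not $d_R$; in fact $d_R = 0$ (see the definition of the idempotent lattice in Section~\ref{idem_lattice_sect}).
\item The product formula is $d_I d_J \equiv d_{I \cup J} \pmod{m}$, not $d_{I \cap J}$; this is exactly Proposition~\ref{set_idem}.
\item The monoid is isomorphic to $(2^R, \cup)$ with identity $\emptyset$, not $(2^R, \cap)$ with identity $R$.
\end{itemize}
These mistakes do not affect the validity of the monoid proof itself, but they would cause confusion if spliced into the paper, and they contradict the lattice description that follows immediately afterward.
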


\begin{prop} \cite[Corollary 2.21]{hewitt}\label{set_idem}
	Let $\{I\}$ be a set of subsets of $R$. Then $\prod_{I} d_I \equiv d_{\bigcup I} \pmod{m}.$
\end{prop}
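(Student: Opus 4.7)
The plan is to prove the identity componentwise via the Chinese Remainder Theorem, using the local behavior of each $d_I$ at the prime powers $p_j^{e_j}$ dividing $m$. Since $\Z/m\Z \cong \prod_{j \in R} \Z/p_j^{e_j}\Z$, it suffices to check that $\prod_I d_I \equiv d_{\bigcup I} \pmod{p_j^{e_j}}$ for each $j \in R$.

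First I would record the key local fact extracted from Theorem \ref{idem_struc} and the notation in Section \ref{notation}: for any subset $J \subseteq R$, the idempotent $d_J$ satisfies $d_J \equiv 0 \pmod{p_j^{e_j}}$ when $j \in J$ (because $p_j^{e_j} \mid g_J \mid d_J$), and $d_J \equiv 1 \pmod{p_j^{e_j}}$ when $j \in R \setminus J$ (because in that case $p_j^{e_j}$ divides $m/g_J$, and $a_J g_J \equiv 1 \pmod{m/g_J}$ by definition). Thus $d_J$ is completely characterized by the $0/1$ pattern $(j \in J)$ across the prime-power factors of $m$.

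Now I would fix an arbitrary $j \in R$ and split into two cases. If $j \in \bigcup I$, then $j \in I_0$ for some particular $I_0$ in the family, so $d_{I_0} \equiv 0 \pmod{p_j^{e_j}}$, which forces the entire product $\prod_I d_I \equiv 0 \pmod{p_j^{e_j}}$; on the other hand $d_{\bigcup I} \equiv 0 \pmod{p_j^{e_j}}$ by the local fact above, since $j \in \bigcup I$. If instead $j \notin \bigcup I$, then $j \notin I$ for every member of the family, so each factor satisfies $d_I \equiv 1 \pmod{p_j^{e_j}}$; their product is therefore $1$, and likewise $d_{\bigcup I} \equiv 1 \pmod{p_j^{e_j}}$. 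In both cases the two sides agree modulo $p_j^{e_j}$, and assembling these congruences via CRT yields the claimed equality modulo $m$.

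There is no real obstacle here: the whole proof is essentially bookkeeping, and the only thing to be careful about is stating the local characterization of $d_J$ cleanly so that both cases fall out symmetrically. The argument also shows incidentally that the map $I \mapsto d_I$ is a monoid homomorphism from $(\mathcal{P}(R), \cup)$ onto the idempotent monoid of $\Z/m\Z$, which is consistent with the preceding proposition that the idempotents form a monoid under multiplication.
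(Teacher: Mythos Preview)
Your proof is correct. The paper does not supply its own proof of this proposition---it is stated as a cited result from \cite{hewitt}---so there is nothing to compare against; your CRT-based componentwise verification, reducing to the $0/1$ pattern of each $d_J$ modulo $p_j^{e_j}$, is the natural and standard argument.
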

\begin{remark}
	All idempotents besides 1 are zero divisors, so cannot be units. Proposition \ref{set_idem} shows in a different way that the all idempotents not equal to 1 cannot be units.
\end{remark}

\subsection{Lattice of idempotents}
\label{idem_lattice_sect}
There is a standard partial ordering on idempotents $d, e$ in a ring $R$ given by $d \leq e$ if and only if $de = ed = e$ in $R$. If $R$ is commutative, the idempotents form a lattice. While there is a standard definition of the meet and join of this lattice, we will instead give an equivalent definition when $R = \Z / m\Z$.  By Proposition \ref{set_idem}, if $d_I$ and $d_J$ are idempotents in $\Z/ m\Z$, then $d_I d_J \equiv d_J \pmod{m}$ if and only if $I \cup J = J$. Therefore, we can express this partial ordering in a simpler way. This partial order induces a lattice structure on the idempotents modulo $m$.
\begin{definition}
	Let $d_I, d_J$ be idempotents in $\Z/ m\Z$. Then $d_I \leq d_J$ if and only if $I \subseteq J$. The idempotents in $\Z/m\Z$ form a lattice with $d_I \vee d_J = d_{I \cup J}$ and $d_I \wedge d_J = d_{I \cap J}$. The infimum is $d_{\{\}} =1$ and the supremum is $d_R = 0$. 
\end{definition}

\begin{example}
	Let $m = p_1^{e_1} p_2^{e_2} p_3^{e_3}$. Then the lattice for idempotents in $\Z/ m\Z$ is given by
	\begin{center}
		\begin{tikzpicture}
		\node (max) at (0,4) {$0$};
		\node (a) at (-2,2) {$d_{\{1,2\}}$};
		\node (b) at (0,2) {$d_{\{1,3\}}$};
		\node (c) at (2,2) {$d_{\{2,3\}}$};
		\node (d) at (-2,0) {$d_{\{1\}}$};
		\node (e) at (0,0) {$d_{\{2\}}$};
		\node (f) at (2,0) {$d_{\{3\}}$};
		\node (min) at (0,-2) {$1$};
		\draw (min) -- (d) -- (a) -- (max) -- (b) -- (f)
		(e) -- (min) -- (f) -- (c) -- (max)
		(d) -- (b);
		\draw[preaction={draw=white, -,line width=6pt}] (a) -- (e) -- (c);
		\end{tikzpicture}
	\end{center}
	
	Observe that this lattice is isomorphic to the lattice of subsets of $\{1,2,3\}$ under the partial ordering $\subseteq$.
\end{example}

\begin{definition}
	The $k^{th}$ level of the idempotent lattice is the row in the lattice consisting of elements $d_I$ where $|I| = k$. We say the $0^{th}$ level of the lattice is $\{1\}$. An idempotent is a \emph{top-level idempotent} if it is an element of the $(r-1)^{st}$ row.
\end{definition}

\begin{remark} \label{two_defs_lattice}
	There is another way of defining the lattice of idempotents. Recall that $d_I \equiv g_I a_I \pmod{p}$ where $g_I$ is the product of the prime powers from index set $I$ dividing $m$. Define a partial ordering $d_I \leq d_J$ if and only if $g_I | g_J$. The meet is $d_I \vee d_J = \text{lcm}(g_I, g_J)$ and the join is $d_I \wedge d_J = \text{gcd}(g_I, g_J)$. This lattice is isomorphic to the subset lattice and thus to our above definition of the idempotent lattice modulo $m$. To see this, notice that $\text{lcm}(g_I, g_J) = g_{I \cup J}$ and $\text{gcd}(g_I, g_J) = g_{I \cap J}$. 
\end{remark}

We introduce the lattice of idempotents here to provide motivation for our additive identities below. In particular, the example of an idempotent lattice will be useful as we provide additive identities across a level of the lattice (Proposition \ref{idem_sum_k}) or across an entire sublattice (Proposition \ref{sublattice_sum})

\subsection{Addition on idempotents}
\label{add_identities}
Although the idempotents form a multiplicative monoid, and not a group or a ring, there is still a good amount of structure to this set. Addition on idempotents is not as simple as multiplication. For one thing, the idempotents are not closed under addition; still, there are some additive identities that can be proved. 

Much is known, classically and otherwise, about the primitive orthogonal idempotent elements $(d_{R \setminus \{i\}}$) including several of the additive identities that we cite below. However, to our knowledge, there are very few identities involving the other levels of idempotents in the literature. We provide several new additive identities involving these other level idempotents.

\begin{prop}\cite{peirce} \label{dual_idems}
	Let $d_I$ be the idempotent in $\Z/ m\Z$ defined by set $I$. Then $1-d_I \pmod m$ is the idempotent $d_{R \setminus I} $.
\end{prop}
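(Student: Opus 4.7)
The plan is to use two observations: first, $1 - d_I$ is automatically an idempotent in any commutative ring once $d_I$ is; second, by the structure theorem (Theorem \ref{idem_struc}) and the Chinese Remainder Theorem, an idempotent in $\Z/m\Z$ is determined uniquely by which prime power factors of $m$ divide it, so identifying $1 - d_I$ reduces to computing it modulo each $p_i^{e_i}$.

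First I would verify idempotency directly: since $d_I^2 \equiv d_I \pmod{m}$, we have
\[
(1 - d_I)^2 = 1 - 2d_I + d_I^2 \equiv 1 - 2d_I + d_I = 1 - d_I \pmod{m}.
\]
So $1 - d_I$ is indeed one of the $2^r$ idempotents enumerated by Corollary \ref{num_idem}, and it remains only to match it to the correct subset of $R$.

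Next I would unpack what Theorem \ref{idem_struc} says about $d_I = a_I g_I$. The congruence $a_I g_I \equiv 1 \pmod{m/g_I}$ combined with $g_I \mid d_I$ gives, via CRT, the prime-power profile
\[
d_I \equiv 0 \pmod{p_i^{e_i}} \text{ for } i \in I, \qquad d_I \equiv 1 \pmod{p_j^{e_j}} \text{ for } j \in R \setminus I.
\]
Subtracting from $1$ swaps these two patterns:
\[
1 - d_I \equiv 1 \pmod{p_i^{e_i}} \text{ for } i \in I, \qquad 1 - d_I \equiv 0 \pmod{p_j^{e_j}} \text{ for } j \in R \setminus I.
\]
Thus $\gcd(1 - d_I, m) = \prod_{j \in R \setminus I} p_j^{e_j} = g_{R \setminus I}$, and $1 - d_I$ is a multiple of $g_{R \setminus I}$ that is congruent to $1$ modulo $m/g_{R \setminus I} = g_I$. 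Writing $1 - d_I = a_{R \setminus I} g_{R \setminus I}$, these are exactly the conditions in Theorem \ref{idem_struc} characterizing $d_{R \setminus I}$, so $1 - d_I \equiv d_{R \setminus I} \pmod m$.

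There is no real obstacle here; the only subtle point is making sure the uniqueness of the idempotent attached to a subset $J \subseteq R$ is invoked correctly, which is why I would explicitly phrase the matching via the CRT profile of residues modulo each $p_i^{e_i}$ rather than trying to manipulate $a_I$ and $a_{R\setminus I}$ directly.
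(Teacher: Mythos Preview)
Your argument is correct: verifying $(1-d_I)^2 \equiv 1-d_I$ and then identifying the idempotent by its residues modulo each $p_i^{e_i}$ via Theorem~\ref{idem_struc} is exactly the natural approach, and the uniqueness point you flag is handled properly. The paper itself does not supply a proof of this proposition---it is simply cited to Peirce as a classical fact---so there is no alternative argument to compare against; your CRT-profile proof would serve well as the omitted justification.
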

\begin{corollary}\cite{peirce} \label{complement_idems}
	Let $d_I$ and $d_{R \setminus I}$ be idempotents in $\Z/ m\Z$ defined by sets $I$ and $R \setminus I$ respectively.  Then $d_I+d_{R \setminus I} \equiv 1 \pmod m$.
\end{corollary}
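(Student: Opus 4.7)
The plan is immediate: this corollary is a one-line rearrangement of Proposition \ref{dual_idems}. That proposition states $1 - d_I \equiv d_{R \setminus I} \pmod{m}$, so adding $d_I$ to both sides yields $d_I + d_{R \setminus I} \equiv 1 \pmod{m}$, which is precisely the claim.

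Since nothing more is needed, I would simply cite Proposition \ref{dual_idems} and display the two-step rearrangement. For completeness I might remark on why this is worth stating as a separate corollary: it frames the duality additively (as a decomposition of $1$ into two idempotents) rather than as the subtractive identity $d_{R \setminus I} = 1 - d_I$, which is the form most useful for the additive identities developed later in the paper (for instance those summing idempotents across a level of the lattice, Proposition \ref{idem_sum_k}, or across a sublattice, Proposition \ref{sublattice_sum}).

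There is no real obstacle. The only thing to verify is that the subtraction in Proposition \ref{dual_idems} is taking place in $\Z/m\Z$ (so moving $d_I$ to the other side is legal modulo $m$), which is automatic. If one wanted a self-contained verification, one could instead argue directly from Theorem \ref{idem_struc}: write $d_I = a_I g_I$ and $d_{R\setminus I} = a_{R\setminus I} g_{R\setminus I}$ with $g_I g_{R\setminus I} = m$, check that $d_I \equiv 1 \pmod{g_I}$ and $d_I \equiv 0 \pmod{g_{R\setminus I}}$ (and symmetrically for $d_{R\setminus I}$), so that $d_I + d_{R\setminus I} \equiv 1 \pmod{g_I}$ and $d_I + d_{R\setminus I} \equiv 1 \pmod{g_{R\setminus I}}$, and conclude by the Chinese Remainder Theorem. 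But given that Proposition \ref{dual_idems} is already in hand, the one-line derivation is the natural presentation.
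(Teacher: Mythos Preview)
Your proposal is correct and matches the paper's approach: the corollary is stated immediately after Proposition \ref{dual_idems} with no separate proof, precisely because it is the trivial rearrangement you describe. Your optional CRT verification is fine but unnecessary here.
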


Now that we have established a few basic addition formulas, we note that we can impose subtraction on idempotents as well. Observe that division of idempotents is undefined since all idempotents not equal to 1 are zero divisors.
\begin{corollary}
	$d_J- d_I \equiv d_J + d_{R \setminus I} - 1 \pmod{m}$. 
\end{corollary}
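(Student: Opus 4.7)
The plan is to derive this as an immediate consequence of Corollary \ref{complement_idems}, which is why it is stated as a corollary in the paper. The content of the statement is essentially a rearrangement: it expresses subtraction of an idempotent as an addition involving the complementary idempotent (together with a correction term of $-1$).

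First I would start from the identity $d_I + d_{R \setminus I} \equiv 1 \pmod{m}$ established in Corollary \ref{complement_idems}. Solving for $-d_I$ gives $-d_I \equiv d_{R \setminus I} - 1 \pmod{m}$. Then I would simply add $d_J$ to both sides to obtain
\begin{equation*}
d_J - d_I \equiv d_J + d_{R \setminus I} - 1 \pmod{m},
\end{equation*}
which is the claimed identity.

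There is no real obstacle here: the whole point of the corollary is to record, for later use in the paper's additive identities and in the modular exponentiation application of Section \ref{app_sect}, that subtraction among idempotents can be rewritten purely in terms of addition of idempotents (plus a constant). The only thing worth flagging for the reader is exactly the remark the authors make before the statement, namely that one cannot attempt an analogous manipulation with division, because every idempotent other than $1$ is a zero divisor in $\Z/m\Z$ and hence non-invertible.
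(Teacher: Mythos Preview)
Your argument is correct and is exactly the intended one: the paper states this corollary immediately after Corollary~\ref{complement_idems} without proof, and the one-line derivation you give (solve $d_I + d_{R\setminus I}\equiv 1$ for $-d_I$ and add $d_J$) is precisely what is meant.
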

\begin{prop} \cite[Lemma 11]{porubsky} \label{sum_idems}
	Let $R \ne \emptyset$. Let $J = \{j_1, \ldots, j_s\} \subset R$ and let $d_J$ be idempotent.
	Then $\sum_{i \in R \setminus J} d_{R \setminus \{i\}} \equiv d_J \pmod m$.
\end{prop}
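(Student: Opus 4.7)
The plan is to verify the identity prime-power by prime-power, using the Chinese Remainder Theorem together with the explicit characterization of $d_I$ from the notation section: $d_I\equiv 0\pmod{p_j^{e_j}}$ for $j\in I$ and $d_I\equiv 1\pmod{p_j^{e_j}}$ for $j\in R\setminus I$. Applied to the primitive orthogonal idempotents $d_{R\setminus\{i\}}$, this says $d_{R\setminus\{i\}}\equiv 1\pmod{p_i^{e_i}}$ and $d_{R\setminus\{i\}}\equiv 0\pmod{p_j^{e_j}}$ for every $j\neq i$.

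First I would fix an arbitrary index $k\in R$ and compute $S:=\sum_{i\in R\setminus J}d_{R\setminus\{i\}}\pmod{p_k^{e_k}}$ in two cases. If $k\in J$, then every $i\in R\setminus J$ satisfies $i\neq k$, so each summand is $\equiv 0\pmod{p_k^{e_k}}$, giving $S\equiv 0\equiv d_J\pmod{p_k^{e_k}}$. If instead $k\in R\setminus J$, then exactly one index in the sum, namely $i=k$, contributes $1$ modulo $p_k^{e_k}$ while all other summands contribute $0$, giving $S\equiv 1\equiv d_J\pmod{p_k^{e_k}}$. Since $S$ and $d_J$ agree modulo every $p_k^{e_k}$, they are equal modulo $m$ by CRT.

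The argument is essentially a bookkeeping check, so there is no serious obstacle; the only thing to be a bit careful about is aligning the notation $d_I\equiv 0\pmod{p_j^{e_j}}$ for $j\in I$ (which is immediate from $g_I=\prod_{j\in I}p_j^{e_j}$ dividing $d_I$) with the dual condition $d_I\equiv 1\pmod{p_j^{e_j}}$ for $j\notin I$ supplied in the notation section. As an alternative one could argue by induction on $|R\setminus J|$, using Corollary \ref{complement_idems} and the base case $\sum_{i\in R}d_{R\setminus\{i\}}\equiv 1\pmod{m}$ (which is the classical completeness relation for the primitive orthogonal idempotents); but the CRT route above seems strictly shorter.
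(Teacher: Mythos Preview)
Your CRT argument is correct: the case split on whether $k\in J$ or $k\in R\setminus J$ is exactly right, and the bookkeeping is sound.

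There is no proof in the paper to compare against directly --- this proposition is quoted from \cite{porubsky} without proof. The closest thing the paper does prove is the generalization in Lemma~\ref{sub_set_sum_gen}, and there the argument is by induction on the size of the index set, using Proposition~\ref{union_int_gen} (the $d_I+d_J\equiv d_{I\cup J}+d_{I\cap J}$ identity) for the inductive step. That is essentially the ``alternative'' route you sketch and then discard. So your chosen approach genuinely differs from the paper's: you work coordinate-wise via CRT and the explicit residues of the primitive idempotents, whereas the paper builds the sum up one index at a time from additive identities already established. Your route is shorter and more self-contained; the paper's inductive route has the advantage of staying entirely within the algebra of idempotent identities, which is the point of view the paper is developing and which carries over verbatim to the sublattice setting modulo~$g_S$.
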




\begin{prop}\cite[Lemma 2.3]{schwarz} \label{idem_sum_1}
	\label{sum_is_1}
	Let $R \ne \emptyset$. Then $\sum_{i=1}^r d_{R \setminus \{i\}} \equiv 1 \pmod m$.
\end{prop}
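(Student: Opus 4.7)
The plan is to read off Proposition \ref{idem_sum_1} as an immediate specialization of Proposition \ref{sum_idems}. Taking $J = \emptyset$ in that proposition sends the index set $R \setminus J$ to all of $R$ and sends the right-hand side $d_J$ to $d_\emptyset$, which equals $1$ by the convention $d_{\{\}} = 1$ established in the definition of the idempotent lattice. The result is exactly $\sum_{i=1}^{r} d_{R \setminus \{i\}} \equiv 1 \pmod m$. So from this angle the proposition is a one-line corollary.

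As an independent sanity check, I would also sketch a direct Chinese Remainder Theorem argument that does not route through Proposition \ref{sum_idems}. By the structure theorem (Theorem \ref{idem_struc}), the idempotent $d_{R \setminus \{i\}}$ has $g_{R \setminus \{i\}} = m / p_i^{e_i}$, so for each prime-power modulus $p_k^{e_k}$ one has $d_{R \setminus \{i\}} \equiv 1 \pmod{p_k^{e_k}}$ when $k = i$ and $d_{R \setminus \{i\}} \equiv 0 \pmod{p_k^{e_k}}$ when $k \neq i$. Fixing $k$ and summing over $i \in R$, exactly one term contributes $1$ modulo $p_k^{e_k}$ and all remaining $r-1$ terms contribute $0$, so $\sum_{i=1}^{r} d_{R \setminus \{i\}} \equiv 1 \pmod{p_k^{e_k}}$ for every $k \in R$. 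Assembling these congruences via CRT upgrades the identity to modulus $m$.

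There is essentially no real obstacle. The only subtlety is cosmetic: one must be careful that the empty index set is admissible on the $J$-side so that $d_\emptyset$ is interpreted as $1$, and that the standing hypothesis $R \neq \emptyset$ is preserved (it is, since we only restrict $J$). Either of the two routes above closes the proof immediately.
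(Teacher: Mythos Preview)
Your proposal is correct on both routes. Note, however, that the paper does not actually supply its own proof of this proposition: it is stated with a citation to \cite[Lemma 2.3]{schwarz} and no \texttt{proof} environment follows. So there is nothing in the paper to compare against directly.

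That said, your first argument (specialize Proposition~\ref{sum_idems} to $J=\emptyset$, using $d_{\{\}}=1$) is exactly in the spirit of how the paper itself relates these statements: in the proof of Theorem~\ref{idem_sum_k} the authors remark that the case $k=r-1$ ``is equivalent to Proposition~\ref{idem_sum_1},'' which is the same specialization read in reverse. The only cosmetic point to watch is that Proposition~\ref{sum_idems} writes $J=\{j_1,\ldots,j_s\}\subset R$, so one should note that $s=0$ (i.e.\ $J=\emptyset$) is admissible there; since $R\neq\emptyset$ is assumed, $\emptyset\subset R$ holds and the substitution is legitimate. Your second, CRT-based argument is an independent and perfectly valid proof that avoids Proposition~\ref{sum_idems} altogether.
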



We now introduce an important additive identity. This will be useful in the proofs of the theorems below.
\begin{prop}
	\label{union_sum}
	$d_I + d_J \equiv d_{I \cup J } + d_{I \cap J}\pmod{m}.$
\end{prop}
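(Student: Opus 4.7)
The plan is to exploit the multiplicative structure already in hand, specifically Proposition \ref{dual_idems} (which gives $d_{R\setminus I} = 1 - d_I$) together with Proposition \ref{set_idem} (which gives $d_I d_J \equiv d_{I\cup J}\pmod m$). The idea is to expand the product $(1-d_I)(1-d_J)$ in two different ways and compare.

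First I would expand algebraically:
\[
(1-d_I)(1-d_J) \;=\; 1 - d_I - d_J + d_I d_J \;\equiv\; 1 - d_I - d_J + d_{I\cup J}\pmod m,
\]
using Proposition \ref{set_idem} on the product $d_I d_J$. Next I would rewrite the same product using Proposition \ref{dual_idems} to replace each $1-d_I$ by the complementary idempotent, and then apply Proposition \ref{set_idem} again:
\[
(1-d_I)(1-d_J) \;=\; d_{R\setminus I}\, d_{R\setminus J} \;\equiv\; d_{(R\setminus I)\cup(R\setminus J)} \;=\; d_{R\setminus(I\cap J)} \;\equiv\; 1 - d_{I\cap J}\pmod m,
\]
where the last step is again Proposition \ref{dual_idems}. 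Setting the two expressions equal and canceling the $1$'s yields the stated identity.

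As a sanity check, or as an alternative proof, one could pass through the Chinese Remainder Theorem decomposition $\Z/m\Z \cong \prod_{i=1}^r \Z/p_i^{e_i}\Z$: under this isomorphism $d_I$ corresponds to the tuple with a $0$ in coordinates indexed by $I$ and a $1$ elsewhere, and then both $d_I + d_J$ and $d_{I\cup J}+d_{I\cap J}$ give the same coordinate-by-coordinate sum (namely $0,1,1,2$ according to whether an index lies in $I\cap J$, $I\triangle J$ (split into its two halves), or outside $I\cup J$). I would mention this only as a verification since the two-way expansion of $(1-d_I)(1-d_J)$ is shorter.

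There is no real obstacle here; the only thing to be careful about is that Proposition \ref{set_idem} is stated multiplicatively for arbitrary collections of subsets, so invoking it on a two-element collection $\{R\setminus I, R\setminus J\}$ is legitimate, and the set-theoretic identity $(R\setminus I)\cup(R\setminus J) = R\setminus(I\cap J)$ (De Morgan) is the structural fact doing the work behind the scenes.
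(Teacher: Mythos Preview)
Your proof is correct and is in fact cleaner than the paper's. The paper argues via the primitive decomposition of Proposition~\ref{sum_idems}: it writes $d_I \equiv \sum_{i\in R\setminus I} d_{R\setminus\{i\}}$, splits and regroups the index set into pieces corresponding to $R\setminus(I\cap J)$, $I\setminus J$, $R\setminus(I\cup J)$, and $J$, and then reassembles everything using Proposition~\ref{sum_idems} and Proposition~\ref{dual_idems}. Your two-way expansion of $(1-d_I)(1-d_J)$ bypasses all of that bookkeeping by using the multiplicative identity $d_Id_J\equiv d_{I\cup J}$ directly, with De~Morgan doing the set-theoretic work in one step. The paper's route has the mild advantage of keeping everything additive (it never multiplies two non-primitive idempotents), which is consonant with the section's theme; your route is shorter, uses only facts already available at this point in the paper, and makes the underlying reason for the identity more transparent.
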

\begin{proof}
	By Proposition \ref{sum_idems}, we have
	\begin{align*}
	d_I & \equiv \sum_{i \in R \setminus I} d_{R \setminus \{i\}}\pmod{m}\\
	&\equiv \sum_{i \in R \setminus (I \cap J)} d_{R \setminus \{i\}} - \sum_{i \in I \setminus J }d_{R \setminus \{i\}} \pmod{m}
	\end{align*}
	Focusing on the last term, by properties of sets we obtain
	\begin{align*}
	\sum_{i \in I \setminus J }d_{R \setminus \{i\}}  & \equiv \sum_{i \in R} d_{R \setminus \{i\}}  - \sum_{i \in R \setminus (I \cup J)} d_{R \setminus \{i\}} - \sum_{i \in J}d_{R \setminus \{i\}} \pmod{m}\\
	&\equiv 1  - d_{I \cup J} - d_{R \setminus J}\pmod{m}
	\end{align*}
	
	Putting these together, 
	\begin{align*}
	d_I &\equiv d_{I \cap J} - \left( 1  - d_{I \cup J} - d_{R \setminus J}\right)\pmod{m}\\
	&\equiv d_{I \cap J} - 1 + d_{I \cup J} + d_{R \setminus J}\pmod{m}
	\end{align*}
	
	Therefore we obtain
	\begin{align*}
	d_I + d_J &\equiv d_{I \cap J} - 1 + d_{I \cup J} + d_{R \setminus J} + d_J\\
	& \equiv d_{ I \cap J} + d_{I \cup J} \pmod{m}
	\end{align*}
	by Proposition \ref{dual_idems}.
\end{proof}

\begin{corollary} 
	Let $d_I$ and $d_J$ be idempotent. If $m$ is odd, $d_I+ d_J$ is idempotent if and only if $d_I + d_J \equiv d_{I \cap J} \pmod{m}$.
\end{corollary}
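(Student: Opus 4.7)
The plan is to prove both directions using Proposition \ref{union_sum} as the key lever, with the expansion of $(d_I + d_J)^2$ doing the real work in the nontrivial direction.

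The $(\Leftarrow)$ direction is immediate: if $d_I + d_J \equiv d_{I \cap J} \pmod{m}$, then $d_I + d_J$ equals an idempotent modulo $m$, hence is idempotent. No use of the hypothesis that $m$ is odd is needed here.

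For $(\Rightarrow)$, I would start by squaring $d_I + d_J$ and applying the fact that $d_I^2 \equiv d_I$, $d_J^2 \equiv d_J$, together with Proposition \ref{set_idem}, which gives $d_I d_J \equiv d_{I \cup J} \pmod{m}$. Setting $(d_I + d_J)^2 \equiv d_I + d_J \pmod{m}$ and simplifying then reduces the idempotency condition to the single congruence
\[
2\, d_{I \cup J} \equiv 0 \pmod{m}.
\]
Since $m$ is odd, $2$ is a unit in $\Z/m\Z$, so this forces $d_{I \cup J} \equiv 0 \pmod{m}$.

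Finally, I would invoke Proposition \ref{union_sum}, which states $d_I + d_J \equiv d_{I \cup J} + d_{I \cap J} \pmod{m}$, to conclude $d_I + d_J \equiv d_{I \cap J} \pmod{m}$, as desired. There is no real obstacle here: the only place the oddness of $m$ is used is to invert $2$, and without that assumption one can only conclude $2 d_{I \cup J} \equiv 0$, which need not imply $d_{I \cup J} \equiv 0$ (for instance in $\Z/2\Z$, where every element is idempotent and the identity fails trivially). This also clarifies why the hypothesis on $m$ is necessary and where it is used.
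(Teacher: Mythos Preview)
Your proposal is correct and follows essentially the same approach as the paper: expand $(d_I + d_J)^2$, use $d_I d_J \equiv d_{I\cup J}$ to reduce idempotency to $2d_{I\cup J}\equiv 0$, cancel the $2$ via oddness of $m$, and then apply Proposition~\ref{union_sum}. The only cosmetic difference is that the paper runs the argument as a single chain of if-and-only-if statements rather than splitting into two directions.
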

\begin{proof}
	Observe that $(d_I + d_J)^2 \equiv d_I + d_J$ if and only if $2d_Id_J \equiv 0 \pmod{m}$. Since $m$ is odd, $d_I + d_J$ is idempotent if and only if $d_I d_J \equiv 0 \pmod{m}$. This occurs if and only if $d_{I \cup J} \equiv 0 \pmod{m}$. 
\end{proof}




\begin{prop}
	If $I_1, \ldots, I_k$ are pairwise disjoint and $I_1 \cup \cdots \cup I_k = J$, then $\sum_{i=1}^k d_{I_i} \equiv k-1 + d_J\pmod{m}$.
\end{prop}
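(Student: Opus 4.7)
The plan is to prove the identity by induction on $k$, with Proposition \ref{union_sum} as the engine. The base case $k=1$ is trivial: $\sum_{i=1}^1 d_{I_i} = d_{I_1} = d_J = 0 + d_J$. The case $k=2$ is the first genuine application: since $I_1$ and $I_2$ are disjoint with $I_1 \cup I_2 = J$, Proposition \ref{union_sum} gives
\[
d_{I_1} + d_{I_2} \equiv d_{I_1 \cup I_2} + d_{I_1 \cap I_2} \equiv d_J + d_\emptyset \equiv d_J + 1 \pmod{m},
\]
where we use that $d_\emptyset = 1$ is the infimum of the idempotent lattice.

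For the inductive step, I would assume the claim holds for $k-1$ pairwise disjoint sets, so that
\[
\sum_{i=1}^{k-1} d_{I_i} \equiv (k-2) + d_{I_1 \cup \cdots \cup I_{k-1}} \pmod{m}.
\]
Adding $d_{I_k}$ to both sides and using that $I_k$ is disjoint from $I_1 \cup \cdots \cup I_{k-1}$, I would apply Proposition \ref{union_sum} once more to the pair $\{I_1 \cup \cdots \cup I_{k-1},\, I_k\}$ to get
\[
d_{I_1 \cup \cdots \cup I_{k-1}} + d_{I_k} \equiv d_J + d_\emptyset \equiv d_J + 1 \pmod{m}.
\]
Substituting this back yields $\sum_{i=1}^k d_{I_i} \equiv (k-2) + 1 + d_J \equiv (k-1) + d_J \pmod{m}$, completing the induction.

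There is no real obstacle here; the only thing to be careful about is the bookkeeping of the constant. Each successive merge of a fresh pairwise disjoint block contributes exactly one $d_\emptyset = 1$ from the intersection term in Proposition \ref{union_sum}, and the $k-1$ on the right-hand side is precisely the count of these merges. Disjointness is used essentially: without it, the intersection term in Proposition \ref{union_sum} would not collapse to $1$ and the increment would fail.
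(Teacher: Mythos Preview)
Your proof is correct and follows essentially the same approach as the paper: induction on $k$, with Proposition \ref{union_sum} applied at the inductive step to the pair $\{I_1 \cup \cdots \cup I_{k-1},\, I_k\}$, using disjointness to make the intersection term $d_\emptyset = 1$. The paper's version is slightly terser (it omits the explicit $k=2$ case), but the argument is the same.
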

\begin{proof}
	We will prove this by induction on $k$.\\
	
	If $k =1$, then $d_{I_1} \equiv 1-1 + d_{I_1} \pmod{m}$.\\
	
	Suppose $\sum_{i=1}^{k-1} d_{I_i} \equiv (k-2) + d_{J'} \pmod{m}$ where $J' = I_1 \cup \cdots \cup I_{k-1} $. Then 
	$$
	\sum_{i=1}^{k} d_{I_i} \equiv (k-2) + d_{J'} + d_{I_k} \equiv k-2 + 1 + d_J \pmod{m}
	$$
	by Proposition \ref{union_sum} and since $J' \cap I_k = \emptyset$ and $J' \cup I_k = J$.
	
\end{proof}
\begin{corollary}
	If $I_1, \ldots, I_k$ are pairwise disjoint and $I_1 \cup \cdots \cup I_k = R$, then $\sum_{i=1}^k d_{I_i} \equiv k-1 \pmod{m}$.
\end{corollary}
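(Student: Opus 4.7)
The plan is to derive this as an immediate consequence of the preceding proposition, applied in the special case $J = R$. The preceding proposition, given pairwise disjoint $I_1, \ldots, I_k$ with union $J$, yields
\[
\sum_{i=1}^k d_{I_i} \equiv (k-1) + d_J \pmod{m}.
\]
Setting $J = R$ reduces the claim to showing $d_R \equiv 0 \pmod m$, so the only substantive step is verifying this last fact.

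That verification is routine from the notation established in Section \ref{notation}: by definition $d_R = a_R\, g_R$ where $g_R = p_1^{e_1} \cdots p_r^{e_r} = m$, so $d_R \equiv 0 \pmod m$. Alternatively, one can invoke Proposition \ref{dual_idems} with $I = \emptyset$: since $d_{\emptyset} = 1$, its dual is $d_R = 1 - 1 = 0$. Either way, $d_R$ vanishes modulo $m$.

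There is no real obstacle here; the corollary is essentially a one-line substitution into the proposition just proved. The only thing worth flagging is that the identity $d_R \equiv 0$ should be stated explicitly (it was already implicit in the lattice description, where $d_R = 0$ was given as the supremum), so that the reader sees exactly how the $d_J$ term disappears from the right-hand side of the parent proposition.
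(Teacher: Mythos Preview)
Your proof is correct and matches the paper's intent: the corollary is stated without proof immediately after the proposition, so it is meant to follow by setting $J = R$ and using $d_R \equiv 0 \pmod m$, exactly as you do.
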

\begin{corollary}
	Let $I \subseteq R$ with $|I| = k>1$. We have $\sum_{i \in I} d_{\{i\}} \equiv k-1  + d_{I} \pmod{m}.$
\end{corollary}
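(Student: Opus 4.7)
The plan is to invoke the previous proposition with a particular choice of partition, making this an immediate specialization rather than a fresh argument. Write $I = \{i_1, \ldots, i_k\}$ and set $I_j := \{i_j\}$ for each $j = 1, \ldots, k$. I would first check the two hypotheses of the preceding proposition: the singletons $\{i_1\}, \ldots, \{i_k\}$ are pairwise disjoint because the $i_j$ are distinct elements of $R$, and their union is $\{i_1\} \cup \cdots \cup \{i_k\} = I$, which plays the role of $J$.

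Having verified the hypotheses, I would apply the previous proposition directly to conclude
\[
\sum_{j=1}^{k} d_{\{i_j\}} \equiv k - 1 + d_I \pmod{m},
\]
which, rewritten as a sum indexed by $i \in I$, is exactly the claimed identity. No further computation is needed, since the structure theorem, the distributivity provided by Proposition \ref{union_sum}, and the inductive step are already absorbed into the cited proposition.

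There is essentially no obstacle here; the only thing to be careful about is that the case $k = 1$ is excluded by the hypothesis $|I| = k > 1$, so one does not have to separately discuss the degenerate situation. The case $k > 1$ falls immediately within the scope of the previous proposition, so the corollary is just a notational rephrasing in which each block of the partition is a singleton and $J = I$.
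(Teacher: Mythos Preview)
Your proposal is correct and matches the paper's intent: the corollary is stated without proof precisely because it is the immediate specialization of the preceding proposition to the partition of $I$ into singletons $\{i_1\},\ldots,\{i_k\}$. There is nothing to add.
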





The following theorem provides additive identities for the $k^{th}$ level of idempotents (all $d_J$ such that $|J| = k$) for all $k = 0, \ldots, r-1$. 
\begin{theorem} \label{idem_sum_k}
	Let $|R| = r > 0$, and let $0 \le k < r$. Then $\sum_{|J|=k} d_J \equiv \binom{r-1}{k} \pmod{m}$.
\end{theorem}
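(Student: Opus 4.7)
The plan is to express each $d_J$ in the sum as a sum of primitive orthogonal idempotents and then swap the order of summation, reducing the problem to counting and to an already-established identity.

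First, I would invoke Proposition \ref{sum_idems} to rewrite each term on the left-hand side as
\[
d_J \equiv \sum_{i \in R \setminus J} d_{R \setminus \{i\}} \pmod{m},
\]
valid for $0 \le k < r$ (so that $R \setminus J \ne \emptyset$; the edge case $k=0$ gives $d_\emptyset = 1 \equiv \sum_{i \in R} d_{R \setminus \{i\}}$, which is exactly Proposition \ref{idem_sum_1}). Substituting this into $\sum_{|J|=k} d_J$ yields a double sum indexed by pairs $(J,i)$ with $|J|=k$ and $i \in R \setminus J$.

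Next, I would swap the order of summation. For a fixed $i \in R$, the number of $k$-subsets $J \subseteq R$ with $i \notin J$ equals $\binom{r-1}{k}$, since we are choosing $k$ elements from the $r-1$ indices in $R \setminus \{i\}$. Hence
\[
\sum_{|J|=k} d_J \equiv \sum_{i \in R} \binom{r-1}{k}\, d_{R \setminus \{i\}} \equiv \binom{r-1}{k} \sum_{i=1}^{r} d_{R \setminus \{i\}} \pmod{m}.
\]

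Finally, applying Proposition \ref{idem_sum_1} collapses the inner sum to $1$, giving $\binom{r-1}{k} \pmod{m}$ as required. I do not expect a serious obstacle here; the only thing to be careful about is checking that the rewriting via Proposition \ref{sum_idems} is applied in the correct range ($k<r$, so that $R\setminus J$ is nonempty and the proposition applies), and confirming that the $k=0$ case is handled consistently by the same chain of identities.
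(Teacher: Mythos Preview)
Your proposal is correct and is essentially the same argument as the paper's: both rewrite each $d_J$ via Proposition~\ref{sum_idems} as a sum of top-level idempotents $d_{R\setminus\{i\}}$, count that each such idempotent appears $\binom{r-1}{k}$ times, and then collapse the resulting sum using Proposition~\ref{idem_sum_1}. Your explicit swap-of-summation phrasing is a slightly cleaner presentation of the same counting, and it handles all $0\le k<r$ uniformly (the paper singles out $k=r-1$ separately, which is not strictly necessary).
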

\begin{proof}
	For $k=r-1$, this proposition is equivalent to Proposition \ref{idem_sum_1}. 
	\newline
	Let $k<r-1$. If $|J|=k$, $d_J \equiv \sum d_I \pmod m$, where $I = R \setminus\{i\}$ for each $i \in  R \setminus J$, by Proposition \ref{sum_idems}. These $I$ are the $(r-1)$-element subsets of $R$ such that $J \subset I$, so each $d_I$ is a summand of exactly $\binom{r-1}{k}$ of the $d_J$. This means that the summation $\sum d_I = 1$ occurs $\binom{r-1}{k}$ times in $\sum_{|J|=k} d_J$, so $\sum_{|J|=k} d_J = \binom{r-1}{k}$.
\end{proof}

\begin{lemma}
	\label{one_level}
	Suppose $|I| = k$. We sum over all $d_J$ so that $|J| = k-n$ and $J \subset I$ where $0 < n< k$. In other words, we sum over all elements in the idempotent lattice $n $ levels below $d_I$ that eventually connect to $d_I$. This gives $$\sum_{\substack{|J| = k-n\\ J \subset I}} d_J  \equiv \binom{k-1}{n} + \binom{k-1}{n-1} d_I\pmod{m}.$$
\end{lemma}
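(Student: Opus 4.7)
The plan is to expand each $d_J$ appearing on the left-hand side as a sum of primitive orthogonal idempotents using Proposition \ref{sum_idems}, then interchange the order of summation so that the key computation becomes a combinatorial count.

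First I would write
\[
\sum_{\substack{|J|=k-n \\ J\subset I}} d_J \;\equiv\; \sum_{\substack{|J|=k-n \\ J\subset I}}\;\sum_{i\in R\setminus J} d_{R\setminus\{i\}} \;\equiv\; \sum_{i\in R} c_i\, d_{R\setminus\{i\}} \pmod{m},
\]
where $c_i$ is the number of admissible subsets $J$ with $i\notin J$. The crucial step is a case split on the location of $i$ relative to $I$. If $i\notin I$, then $J\subset I$ already forces $i\notin J$, so every admissible subset contributes and $c_i = \binom{k}{k-n}=\binom{k}{n}$. If $i\in I$, the admissible $J$ are exactly the $(k-n)$-subsets of the $(k-1)$-element set $I\setminus\{i\}$, giving $c_i = \binom{k-1}{k-n} = \binom{k-1}{n-1}$.

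Next I would collapse the two partial sums using the identities already established. Applying Proposition \ref{sum_idems} with index set $I$ itself yields $\sum_{i\in R\setminus I} d_{R\setminus\{i\}} \equiv d_I$, and then Proposition \ref{idem_sum_1} gives $\sum_{i\in I} d_{R\setminus\{i\}} \equiv 1 - d_I$. Substituting these into the two-term expression produces a constant term plus a multiple of $d_I$, and Pascal's rule $\binom{k}{n} = \binom{k-1}{n-1} + \binom{k-1}{n}$ converts the leftover coefficient of $d_I$ into the advertised binomial form.

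The main hurdle is purely combinatorial book-keeping: the case split must be executed carefully so that the two counts $\binom{k}{n}$ and $\binom{k-1}{n-1}$ are assigned to the correct subsums, and Pascal's rule must be applied at the right moment to match the stated coefficients. Once the reindexing is done correctly, all of the ring-theoretic content is supplied by Propositions \ref{sum_idems} and \ref{idem_sum_1}, so no new structural input about $\Z/m\Z$ is required. The hypothesis $0 < n < k$ is needed only to guarantee that the binomial coefficients are meaningful (in particular $\binom{k-1}{n-1}$ for $n\geq 1$ and $\binom{k-1}{n}$ for $n\leq k-1$) and that the collection of subsets $J$ is nonempty.
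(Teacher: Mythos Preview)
Your approach is exactly the paper's: expand each $d_J$ via Proposition~\ref{sum_idems}, split the index $i$ according to whether it lies in $I$, count multiplicities, and collapse the two partial sums using $\sum_{i\notin I}d_{R\setminus\{i\}}=d_I$ and $\sum_{i\in I}d_{R\setminus\{i\}}=1-d_I$. Your multiplicities $c_i=\binom{k}{n}$ for $i\notin I$ and $c_i=\binom{k-1}{n-1}$ for $i\in I$ are correct.

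Be aware, however, that carrying your computation through honestly yields
\[
\binom{k}{n}d_I+\binom{k-1}{n-1}(1-d_I)\;=\;\binom{k-1}{n-1}+\binom{k-1}{n}\,d_I,
\]
with the two binomial coefficients \emph{swapped} relative to the displayed statement. The paper's proof arrives at the stated form only because of a slip in the counting step: it asserts that each $d_{R\setminus\{i\}}$ with $i\in I$ appears $\frac{\binom{k}{n}(k-n)}{k}$ times, but since $|I\setminus J|=n$ the correct factor is $n$, giving $\frac{\binom{k}{n}\,n}{k}=\binom{k-1}{n-1}$ rather than $\binom{k-1}{n}$. A direct check with $m=210$, $I=\{1,2,3\}$, $n=1$ confirms the sum is $d_{\{1,2\}}+d_{\{1,3\}}+d_{\{2,3\}}=36+190+15\equiv 31$, matching $1+2d_I$ and not $2+d_I=122$. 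The downstream Theorem~\ref{sublattice_sum} is unaffected because $\sum_{n=1}^{k-1}\binom{k-1}{n}=\sum_{n=1}^{k-1}\binom{k-1}{n-1}$, but you should not expect Pascal's rule to land you on the coefficients as printed.
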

\begin{proof}
	\begin{align*}
	\sum_{\substack{|J| = k-n\\ J \subset I}} d_J &= \sum_{\substack{|J| = k-n\\ J \subset I}}  \sum_{i \in R \setminus J} d_{R \setminus \{i\}}
	\end{align*}
	by Proposition \ref{sum_idems}. Note that 
	$$
	\sum_{i \in R \setminus J} d_{R \setminus \{i\}} = \sum_{i \in R \setminus I} d_{R\setminus \{i\}} +\sum_{i \in (I \setminus J)} d_{R \setminus \{i\}}
	$$
	and that $|I \setminus J| = n$. Further, there are $\binom{k}{n} $ subsets $J$ satisfying the conditions. Therefore
	\begin{align*}
	\sum_{\substack{|J| = k-n\\ J \subset I}} d_J  &\equiv \binom{k}{n} \sum_{i \in R \setminus I} d_{R\setminus \{i\}}  + \sum_{\substack{|J| = k-n\\ J \subset I}} \sum_{i \in (I \setminus J)} d_{R \setminus \{i\}}\\
	&\equiv \binom{k}{n} d_I + 	\sum_{\substack{|J| = k-n\\ J \subset I}} \sum_{i \in (I \setminus J)} d_{R \setminus \{i\}}
	\end{align*}
	We now focus on understanding this last summation. When taking the summation of $|J| = k-n, J \subset I$, every term in the summation will contain all $d_{R \setminus \{i\}}$ such that $i \not \in I$. That is, every term will contain $d_{R \setminus I}$. Each element in $R \setminus \{i\}$ for $i \in I$ occurs $\frac{\binom{k}{n} (k-n)}{k}$ times since there are $\binom{k}{n}$ terms, each containing $k-n$ terms from $I$ and all occur equally often. Thus,
	\begin{align*}
	\sum_{\substack{|J| = k-n\\ J \subset I}} \sum_{i \in (I \setminus J)} d_{R \setminus \{i\}}&= \frac{\binom{k}{n} (k-n)}{k} d_{R \setminus I} 
	\end{align*}
	
	Putting these together, 
	\begin{align*}
	\sum_{\substack{|J| = k-n\\ J \subset I}} d_J &\equiv\binom{k}{n} d_I +\frac{\binom{k}{n} (k-n)}{k} d_{R \setminus I}   \\
	&\equiv \left(\binom{k}{n} -\frac{\binom{k}{n} (k-n)}{k} \right) d_I +\frac{\binom{k}{n} (k-n)}{k}   \\
	&\equiv \binom{k-1}{n-1}d_I + \binom{k-1}{n}\pmod{m}.
	\end{align*}
\end{proof}

The last lemma seems abstract, so consider the following corollary when $n = 1$. 
\begin{corollary}
	Suppose $|I| = k$. We sum over all $d_J$ so that $|J| = k-1$ and $J \subset I$. In other words, we sum over all elements in the idempotent lattice in the level directly below $d_I$ that are connected to $d_I$. This gives the identity $$\sum_{|J| = k-1, J \subset I} d_J  \equiv (k-1)+d_I \pmod{m}.$$
\end{corollary}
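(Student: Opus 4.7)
The plan is immediate: this corollary is simply the specialization $n=1$ of the preceding Lemma \ref{one_level}, so no new ideas are needed. I would substitute $n=1$ into the identity
\[
\sum_{\substack{|J|=k-n\\ J\subset I}} d_J \equiv \binom{k-1}{n} + \binom{k-1}{n-1}\,d_I \pmod{m},
\]
and then evaluate the two binomial coefficients: $\binom{k-1}{1}=k-1$ and $\binom{k-1}{0}=1$. This yields exactly $(k-1) + d_I$ on the right-hand side, as claimed.

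There is essentially no obstacle, since all the combinatorial work (counting how often each $d_{R\setminus\{i\}}$ appears when one rewrites each lower-level idempotent via Proposition \ref{sum_idems}) has already been absorbed into the proof of Lemma \ref{one_level}. If for clarity one preferred a self-contained derivation in this special case, I would write each $d_J$ with $|J|=k-1$, $J\subset I$, as $d_J = \sum_{i\in R\setminus J} d_{R\setminus\{i\}}$ by Proposition \ref{sum_idems}; there are $k$ such subsets $J$ (one for each element of $I$ removed). Summing, every $d_{R\setminus\{i\}}$ with $i\in R\setminus I$ appears in all $k$ terms, contributing $k\cdot \sum_{i\in R\setminus I} d_{R\setminus\{i\}} = k\, d_I$, while each $d_{R\setminus\{i\}}$ with $i\in I$ appears in exactly $k-1$ of the subsets (those that omit some other element of $I$), contributing $(k-1)\sum_{i\in I} d_{R\setminus\{i\}} = (k-1)\,d_{R\setminus I}$. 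Thus the total is $k\,d_I + (k-1)\,d_{R\setminus I}$, and using $d_{R\setminus I} \equiv 1 - d_I \pmod{m}$ from Proposition \ref{dual_idems} gives $(k-1) + d_I$, matching the claim.

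Either route is routine; I would likely present the short version that just cites Lemma \ref{one_level} with $n=1$, since the corollary's only purpose is to make the abstract identity concrete. No genuinely hard step arises.
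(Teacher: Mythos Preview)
Your proposal is correct and matches the paper's own treatment: the corollary is simply Lemma~\ref{one_level} specialized to $n=1$, with $\binom{k-1}{1}=k-1$ and $\binom{k-1}{0}=1$. Your optional self-contained derivation via Proposition~\ref{sum_idems} and Proposition~\ref{dual_idems} is also valid and mirrors the counting argument inside the lemma's proof.
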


With Lemma \ref{one_level} established, we can now determine the summation of all idempotents below $d_I$ in terms of $I$ only. We state the theorem modulo $m$ and provide a corollary for determining the summation modulo $g_I$ as well. 
\begin{theorem} \label{sublattice_sum}
	Let $|I| = k$. Then $\sum_{J \subseteq I} d_J \equiv 2^{k-1}(1+d_I)\pmod{m}$. 
\end{theorem}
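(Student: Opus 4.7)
The plan is to reduce directly to Lemma \ref{one_level} by stratifying the sum $\sum_{J \subseteq I} d_J$ according to the cardinality of $J$. Concretely, I would write
\[
\sum_{J \subseteq I} d_J \;=\; d_\emptyset \;+\; d_I \;+\; \sum_{n=1}^{k-1} \; \sum_{\substack{|J|=k-n\\ J \subset I}} d_J,
\]
separating out the two extreme cases $J = \emptyset$ and $J = I$ because these lie outside the range $0 < n < k$ in which Lemma \ref{one_level} applies. The inner sum for each fixed $n$ in the admissible range is then $\binom{k-1}{n} + \binom{k-1}{n-1} d_I \pmod{m}$ by that lemma, and $d_\emptyset = 1$.

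Substituting and grouping constant terms and $d_I$ terms separately gives
\[
\sum_{J \subseteq I} d_J \;\equiv\; 1 + d_I + \sum_{n=1}^{k-1}\binom{k-1}{n} \;+\; \left(\sum_{n=1}^{k-1} \binom{k-1}{n-1}\right) d_I \pmod{m}.
\]
Both remaining sums reduce by elementary binomial identities: $\sum_{n=1}^{k-1}\binom{k-1}{n} = 2^{k-1}-1$ (the full row sum minus the $n=0$ term), and after reindexing $m = n-1$, $\sum_{n=1}^{k-1}\binom{k-1}{n-1} = 2^{k-1}-1$ as well. Plugging in yields
\[
\sum_{J \subseteq I} d_J \;\equiv\; 1 + d_I + (2^{k-1}-1) + (2^{k-1}-1) d_I \;\equiv\; 2^{k-1} + 2^{k-1} d_I \;=\; 2^{k-1}(1+d_I) \pmod{m}.
\]

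I do not anticipate any real obstacle; the only thing to be careful about is the bookkeeping at the two boundaries $n=0$ and $n=k$, where the formula in Lemma \ref{one_level} does not apply and one must insert $d_I$ and $1$ by hand. A sanity check at $k=1$ (where $I = \{i\}$ has only the subsets $\emptyset$ and $I$) gives $1 + d_I = 2^0(1+d_I)$, matching the claimed formula and confirming that the boundary handling is correct.
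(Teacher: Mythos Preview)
Your proposal is correct and follows essentially the same route as the paper: stratify by $|J|$, handle the boundary cases $J=\emptyset$ and $J=I$ separately, apply Lemma~\ref{one_level} for each intermediate level, and finish with the standard binomial row-sum identity. The only cosmetic difference is that the paper absorbs the stray $d_I$ into the sum to write $d_I\sum_{n=0}^{k-1}\binom{k-1}{n}=2^{k-1}d_I$ directly, whereas you compute each partial sum as $2^{k-1}-1$ and then recombine; the arithmetic is the same.
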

\begin{proof}
	By Lemma \ref{one_level}, $$\sum_{\substack{|J| = k-n\\ J \subset I}} d_J \equiv \binom{k-1}{n} + \binom{k-1}{n-1} d_I\pmod{m}.$$ When $J = \emptyset$, the idempotent is $d_J = 1$ and when $J = I$, the idempotent is $d_J = d_I$. Therefore
	\begin{align*}
	\sum_{J \subseteq I} d_J &\equiv 1 + d_I + \sum_{n=1}^{k-1} \sum_{\substack{|J| = k-n\\ J \subset I}} d_J \pmod{m}\\
	&\equiv 1+ d_I+ \sum_{n=1}^{k-1} \binom{k-1}{n} +d_I \sum_{n=1}^{k-1}\binom{k-1}{n-1}  \pmod{m}\\
	& \equiv 2^{k-1} + d_I \sum_{n=0}^{k-1} \binom{k-1}{n}\pmod{m}\\
	& \equiv 2^{k-1} + 2^{k-1} d_I \pmod{m}
	\end{align*}
\end{proof}

\begin{corollary}
	Let $|I| = k$. Then $\sum_{J \subseteq I} d_J \equiv 2^{k-1}\pmod{g_I}$. 
\end{corollary}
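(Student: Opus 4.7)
The plan is to obtain the corollary directly by reducing Theorem \ref{sublattice_sum} modulo $g_I$. Since $g_I \mid m$, any congruence that holds modulo $m$ also holds modulo $g_I$, so from the theorem we have
\[
\sum_{J \subseteq I} d_J \equiv 2^{k-1}(1 + d_I) \pmod{g_I}.
\]

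The only further observation needed is that $d_I \equiv 0 \pmod{g_I}$. This is immediate from the notation established in Section \ref{notation}: the idempotent $d_I$ has representative $a_I g_I$, and $g_I \mid a_I g_I$. So substituting $d_I \equiv 0 \pmod{g_I}$ into the displayed equation collapses the right-hand side to $2^{k-1}$, which is exactly the claim.

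There is no real obstacle here; the content of the corollary is a clean specialization of the preceding theorem, and the only thing to be careful about is verifying that the representative $a_I g_I$ of $d_I$ is indeed divisible by $g_I$ (which is literally the content of the definition $d_I = a_I g_I$). No new computation with idempotent sums is required.
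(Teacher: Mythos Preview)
Your proof is correct and is exactly the intended argument: the paper states this corollary without proof immediately after Theorem \ref{sublattice_sum}, and the reduction you describe (pass from $m$ to $g_I$ using $g_I \mid m$, then use $d_I = a_I g_I \equiv 0 \pmod{g_I}$) is the obvious one-line justification.
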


\begin{corollary}
	$\sum_{I\subseteq R} d_I \equiv 2^{r-1} \pmod{m}$.
\end{corollary}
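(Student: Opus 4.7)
The plan is to apply Theorem \ref{sublattice_sum} directly with the special choice $I = R$. Since $|R| = r$, the theorem immediately yields
\[
\sum_{I \subseteq R} d_I \equiv 2^{r-1}(1 + d_R) \pmod{m}.
\]

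Next, I would invoke the fact established in Section \ref{idem_lattice_sect} that the supremum of the idempotent lattice is $d_R = 0$, which is valid in $\Z/m\Z$ because $g_R = p_1^{e_1} \cdots p_r^{e_r} = m$, so the corresponding idempotent reduces to $0$ modulo $m$. Substituting this into the previous line collapses the factor $1 + d_R$ to $1$, leaving $\sum_{I \subseteq R} d_I \equiv 2^{r-1} \pmod{m}$, as claimed.

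There is no real obstacle here; the corollary is essentially a specialization of Theorem \ref{sublattice_sum} at the top of the lattice. The only subtlety worth flagging is the implicit use of the identity $d_R = 0$, which is what makes the case $I = R$ degenerate into a clean power of $2$ rather than retaining a nontrivial idempotent coefficient as in the general statement.
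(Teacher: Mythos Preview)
Your proposal is correct and matches the paper's intended approach: the corollary is stated without proof immediately after Theorem~\ref{sublattice_sum}, and specializing that theorem to $I=R$ together with $d_R=0$ is exactly the implied argument. (Equivalently, one could set $I=R$ in the preceding corollary and use $g_R=m$; this is the same idea.)
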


\section{Sublattices of the infinite divisibility lattice}
In Section \ref{idem_lattice_sect}, we defined the lattice of idempotents in $\Z / m\Z$. In this section, we seek to generalize the notion of an idempotent lattice. By Remark \ref{two_defs_lattice}, the idempotent lattice modulo $m$ can be defined in terms of the elements $g_I = \text{gcd}(d_I, m)$ for each $I \subseteq R$. We generalize this definition of the idempotent lattice modulo $m$ to both finite and infinite lattices. These will be subsets of the infinite divisibility lattice on $\N$, which we call \emph{consistent sublattices}. Since the only idempotents in $\Z$ are 0 and 1, the notion of an infinite idempotent lattice in $\N$ does not make sense. However, the construction of the infinite divisibility lattice on $\N$ permits us to relate an infinite set of finite lattices over $\Z / m\Z$ for integers $m$ to each other as sublattices of this infinite lattice.

\subsection{An infinite lattice related to the idempotents modulo $m$}

\begin{definition}
	The infinite divisibility lattice on $\N$ is given by the partial ordering $a \leq b$ if and only if $a | b$. The meet of two elements $a,b$ is $a \vee b = \text{lcm}(a,b)$ and the join is $a \wedge b = \text{gcd}(a,b)$. 
\end{definition}

\begin{definition}
	List the primes in increasing order with $p_1 = 2$. Let $S  \subseteq \N$ and $T \subseteq S$. Let $E$ be an infinite sequence $E = (e_1, e_2, \ldots)$ of elements in $\N \cup \{0\}$ so that $e_i > 0$ for all $i \in S$ and $e_i = 0$ for all $i \not \in S$. A sublattice of the infinite divisibility lattice is \textit{consistent} if the elements of this lattice are exactly the integers $g_K = \prod_{k \in K} p_k^{e_k}$ for all $T \subseteq K \subseteq S$. We denote this sublattice $\mathcal{L}_{E, S, T}$.
	
\end{definition}

\begin{remark}
	Suppose $S= \{s_1, \ldots, s_r\}$ is finite and $T = \emptyset$. Notice that $E$ contains only finitely many nonzero elements. Set $m = p_{s_1}^{e_{s_1}} \cdots p_{s_r}^{e_{s_r}}$ and let $d_K$ be the idempotent modulo $m$ corresponding to set $K \subseteq S$. The consistent sublattice defined by sets $E$, $S$ and $T= \emptyset$ contains exactly the elements $g_K = \text{gcd}(m, d_K)$. Therefore $\mathcal{L}_{E, S, T}$ is isomorphic to the a sublattice of the idempotent lattice modulo $m$. Since $g_K$ depends only on the sets $E, S$, and $T$, it makes sense to extend the notion of the elements $g_K$ when $S$ is infinite.
	
	Suppose $S$ is infinite, $T \subseteq S$, and $E = (e_1, e_2, \ldots)$ with $e_i > 0$ if $i \in S$ and $e_i =0$ if $i \not \in S$. Write the supremum $g_S = \prod_{s\in S} p_s^{e_s}$ and the infimum $g_T = \prod_{t \in T} p_t^{e_t}$. Let $\mathcal{L}_{E,S, T}$ be the consistent sublattice with supremum $g_S$ and infimum $g_T$. Then $b \in \mathcal{L}$ if and only if
	$
	b =g_T \prod_{j \in K} p_j^{e_j}
	$ 
	for some $K \subseteq S \setminus T.$ 
\end{remark}

\begin{definition}
	A consistent sublattice of the infinite divisibility lattice is \textit{maximally consistent} if $S= \N$ and $T =\emptyset$, that is, if the supremum is divisible by every prime and the infimum is 1.
\end{definition}

We can uniquely determine a maximally consistent sublattice from the first level, which consists of prime powers. It is clear that there is a bijection between sequences of positive integers and maximally consistent sublattices; each sequence of positive integers $(e_1, e_2, \ldots)$ determines a distinct maximally consistent sublattice by setting the first level equal to $\{p_1^{e_1} , p_2^{e_2}, \ldots\}$ and this correspondence is surjective.
\begin{prop}
	The set of maximally consistent sublattices of $\mathcal{L}$ is uncountable and has cardinality of $\R$.
\end{prop}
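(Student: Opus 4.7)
The plan is to leverage the bijection already established in the paragraph preceding the proposition. That paragraph sets up a bijective correspondence $\Phi$ between sequences of positive integers $(e_1, e_2, \ldots) \in \N^{\N}$ and maximally consistent sublattices of $\mathcal{L}$, where $\Phi(e_1, e_2, \ldots)$ is the sublattice whose first level is $\{p_1^{e_1}, p_2^{e_2}, \ldots\}$. So the entire proposition reduces to computing $|\N^{\N}|$ and showing it equals $|\R|$.

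For the lower bound, I would note that the inclusion $\{1,2\}^{\N} \hookrightarrow \N^{\N}$ is an injection. Since $|\{1,2\}^{\N}| = 2^{\aleph_0} = |\R|$ (for instance via binary expansions of elements of $[0,1]$, together with the standard argument handling dyadic rationals to fix non-uniqueness), this gives $|\N^{\N}| \geq |\R|$. In particular, $\N^{\N}$ is uncountable, which settles the first half of the statement.

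For the upper bound, I would apply standard cardinal arithmetic:
\[
|\N^{\N}| \;\leq\; |\R^{\N}| \;=\; (2^{\aleph_0})^{\aleph_0} \;=\; 2^{\aleph_0 \cdot \aleph_0} \;=\; 2^{\aleph_0} \;=\; |\R|,
\]
using $|\N| \leq |\R|$ and $\aleph_0 \cdot \aleph_0 = \aleph_0$. Combining the two inequalities by the Cantor--Bernstein--Schroeder theorem yields $|\N^{\N}| = |\R|$, and composing with the bijection $\Phi$ gives the claim.

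There is no substantive obstacle here; all of the structural work is carried in the bijection $\Phi$. The one point worth flagging in the write-up is that one should verify the correspondence is truly a bijection onto the set of \emph{maximally} consistent sublattices: the requirement $S = \N$ and $T = \emptyset$ is exactly what forces each coordinate $e_i$ to be a \emph{positive} integer (not merely nonnegative), so the parametrizing set is precisely $\N^{\N}$ and not $(\N \cup \{0\})^{\N}$. Once this is observed, the cardinality computation above completes the proof.
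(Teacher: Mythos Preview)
Your proof is correct. Like the paper, you begin with the bijection between maximally consistent sublattices and sequences of positive integers established just before the proposition, so the substantive content in both proofs is the computation of $|\N^{\N}|$. The paper takes a different route for that step: it writes down an explicit ``bijection'' from $\N^{\N}$ to $\R$ by sending $(e_1,e_2,\ldots)$ to the decimal $0.(e_1-1)(e_2-1)\cdots$, with claimed inverse $0.b_1b_2\cdots \mapsto (b_1+1,b_2+1,\ldots)$. Your approach via cardinal arithmetic and Cantor--Bernstein--Schroeder is arguably the cleaner one here, since the paper's explicit map glosses over the usual difficulties with decimal representations (multi-digit values of $e_i-1$, non-uniqueness of terminating decimals, and the fact that the image lies in $[0,1)$ rather than $\R$); your argument sidesteps all of this at the cost of invoking standard set-theoretic facts. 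Your closing remark about why the parametrizing set is $\N^{\N}$ rather than $(\N\cup\{0\})^{\N}$ is a nice point of care that the paper leaves implicit.
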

\begin{proof}
	The set of maximally consistent sublattices of $\mathcal{L}$ has the cardinality of $\R$. From the remark above, we know that each maximally consistent sublattice is uniquely determined by the sequence $(e_1, e_2, \ldots)$ of positive integers. There is a bijection of the set of such sequences to $\R$ by sending $(e_1, e_2, \ldots)$ to $.(e_1-1)(e_2-1)\cdots$. The inverse of this function is $.b_1b_2\ldots\mapsto (b_1+1, b_2 + 1, \ldots)$. 
\end{proof}

\begin{remark}
	Let the idempotent lattice modulo $m$ be denoted $\mathcal{L}_m$. Observe that there is a bijection between finite maximally consistent lattices and lattices of the form $\mathcal{L}_m$.
\end{remark}
\subsection{Sublattices of the finite consistent lattices}

Let $\mathcal{L}_{E, S, T}$ be a finite consistent lattice and let $S' \supseteq S$ be a finite set. Then $\mathcal{L}_{E, S,T}$ can be extended to the finite maximally consistent lattice $\mathcal{L}_{E, S', \emptyset}$. As noted in the remark above, $L_{E,S', \emptyset}$ is isomorphic to the lattice $\mathcal{L}_m$ of idempotents in $\Z/ m\Z$ where $m = g_{S'}$ is the supremum of $\mathcal{L}_{E, S',T}$. Therefore finite consistent sublattices are isomorphic to sublattices of $\mathcal{L}_m$. We denote this sublattice of $\mathcal{L}_m$ by $\mathcal{L}_{m, S,T}$ since the sequence $E$ is clear given $m$.

In the next section, we will consider idempotent identities on sublattices of $\mathcal{L}_m$. We use notions from consistent lattices throughout.

\section{Idempotents in sublattices}
Many of the theorems we have proved in Section \ref{idem_section_1} have analogues in the consistent lattices $\mathcal{L}_{m,S,T}$ of $\mathcal{L}_m$. For notation, we set $g_{S}$ to be the supremum of $\mathcal{L}_{m , S, T}$ and we let $d_S$ be the corresponding idempotent modulo $m$. Recall that $m = g_{S'}$ for some finite set $S' \supseteq S$. Similarly, $g_T$ is the infimum and $d_T$ is the corresponding idempotent modulo $m$.

\subsection{Multiplication on idempotents in finite consistent lattices}
\begin{prop} \label{set_idem_gen}
	Let $\mathcal{L}_{m,S,T}$ be a sublattice of $\mathcal{L}_m$. Let $\{I\}$ be a set of subsets of $S$ that contain $T$. Then $\prod_{I} d_I\equiv  d_{\bigcup I} \pmod{g_S}.$
\end{prop}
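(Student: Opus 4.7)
The plan is to reduce this directly to Proposition \ref{set_idem}, using only the divisibility $g_S \mid m$. The key conceptual point is that even though the notation $\mathcal{L}_{m,S,T}$ suggests we are working in a smaller world, the idempotents $d_I$ for $T \subseteq I \subseteq S$ are still genuine idempotents of $\Z/m\Z$ (since $m = g_{S'}$ for some finite $S' \supseteq S$), and the product $\prod_I d_I$ is computed in $\Z/m\Z$ in the usual way.

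First, I would verify that each $d_I$ appearing in the statement, as well as $d_{\bigcup I}$, is a valid idempotent of $\Z/m\Z$ in the sense of Theorem \ref{idem_struc}. This is immediate from the containments $I \subseteq S \subseteq S' = R$, which also give $\bigcup I \subseteq S \subseteq R$. Then I would apply Proposition \ref{set_idem} directly, with $R$ interpreted as $S'$, to conclude
\[
\prod_{I} d_I \equiv d_{\bigcup I} \pmod{m}.
\]
Next, I would observe that $g_S = \prod_{i \in S} p_i^{e_i}$ divides $m = g_{S'} = \prod_{i \in S'} p_i^{e_i}$ because $S \subseteq S'$. Reducing the congruence modulo $g_S$ then yields the desired identity. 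The hypothesis $T \subseteq I$ plays no role in the argument; it is present only so that each $d_I$ actually lies in the sublattice $\mathcal{L}_{m,S,T}$.

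The main ``obstacle'' here is really bookkeeping rather than mathematics: one has to convince oneself that the consistent-sublattice framework has not introduced any new objects beyond a relabeling of a subset of the idempotents already classified in Section \ref{idem_section_1}. Once that translation is made, the proof collapses to a one-line invocation of Proposition \ref{set_idem} together with the divisibility $g_S \mid m$.
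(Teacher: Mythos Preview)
Your proposal is correct and matches the paper's own proof essentially verbatim: the paper simply notes that the congruence already holds modulo $m$ (by Proposition \ref{set_idem}) and reduces modulo $g_S$ using $g_S \mid m$. Your additional remarks about why each $d_I$ is a genuine idempotent of $\Z/m\Z$ and why the hypothesis $T \subseteq I$ is only there for sublattice membership are accurate elaborations that the paper leaves implicit.
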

\begin{proof}
	Since $g_S | m$ and this equivalence holds $\pmod{m}$, then this equivalence must also hold $\pmod{g_S}$. 
\end{proof}

\subsection{Addition on idempotents in finite consistent lattices}
In this section, we generalize the additive identities established in Section \ref{add_identities}. Observe that for any element $g_I \in \mathcal{L}_{m, S, T}$, $T \subseteq I \subseteq S$. Therefore all sets considered in the following propositions and theorems must contain $T$. 

\label{gen_id_add}
\begin{prop}
	\label{union_int_gen}
	Let $\mathcal{L}_{m,S,T}$ be a sublattice of $\mathcal{L}_m$ and let $T \subseteq I, J \subseteq S$. Then $d_I + d_J \equiv d_{I \cup J } + d_{I \cap J}\pmod{g_S}.$
\end{prop}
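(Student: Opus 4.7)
The plan is to reduce this proposition directly to the already-established Proposition \ref{union_sum}. By the setup in this section, we have $m = g_{S'}$ for some finite $S' \supseteq S$, so $T \subseteq I \subseteq S \subseteq S' = R$ and likewise for $J$. Thus $I$ and $J$ are bona fide subsets of the index set $R$ associated to $m$, and the idempotents $d_I, d_J, d_{I \cup J}, d_{I \cap J}$ are all idempotents modulo $m$ in the sense of Theorem \ref{idem_struc}.

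Given that, Proposition \ref{union_sum} applies verbatim to yield the identity
\[
d_I + d_J \equiv d_{I \cup J} + d_{I \cap J} \pmod{m}.
\]
To pass from $m$ to $g_S$, I would observe that $g_S$ is a divisor of $m = g_{S'}$ (since $S \subseteq S'$ and the exponents in the common primes agree by consistency of the lattice). Any congruence that holds modulo $m$ therefore also holds modulo any divisor of $m$, in particular modulo $g_S$. This reduction step is precisely the one used in the proof of Proposition \ref{set_idem_gen}, and it gives the desired conclusion immediately.

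Since both ingredients (Proposition \ref{union_sum} and divisibility $g_S \mid m$) are already in hand, there is no real obstacle; the only thing to check carefully is that the sets $I$ and $J$ satisfy the hypotheses of Proposition \ref{union_sum}, which they do because every subset of $S$ is a subset of $R$. The whole proof should be only a couple of lines.
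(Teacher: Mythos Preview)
Your proposal is correct and matches the paper's own argument essentially verbatim: the paper's proof is the single line ``This holds modulo $m$ and $g_S \mid m$,'' which is exactly your reduction to Proposition~\ref{union_sum} followed by passing to the divisor $g_S$.
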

\begin{proof}
	This holds modulo $m$ and $g_S | m$. 
\end{proof}

\begin{prop}
	Let $\mathcal{L}_{m,S,T} \subseteq \mathcal{L}_m$. Let $I_1, \ldots, I_k$ be sets such that $T \subseteq I_\ell \subseteq S$ for all $1 \leq \ell \leq k$ and $I_\ell \cap I_j = T$ for all $ \ell \ne j$. Suppose that $I_1 \cup \cdots \cup I_k = J$. We have $\sum_{i=1}^k d_{I_i} \equiv (k-1)d_T + d_J\pmod{g_S}$.
\end{prop}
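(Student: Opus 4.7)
The plan is to proceed by induction on $k$, with Proposition \ref{union_int_gen} (the sublattice version of the union–intersection identity) doing the work in the inductive step. This mirrors the unrestricted case proved earlier, the only twist being that intersections collapse to $T$ rather than to $\emptyset$, which is exactly what produces the extra $d_T$ term in the conclusion.

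For the base case $k=1$, the identity reads $d_{I_1}\equiv 0\cdot d_T + d_J \pmod{g_S}$, which holds since $J=I_1$. For the inductive step, assume
$$\sum_{i=1}^{k-1} d_{I_i}\equiv (k-2)d_T + d_{J'}\pmod{g_S},$$
where $J'=I_1\cup\cdots\cup I_{k-1}$. The key set-theoretic observation is that
$$J'\cap I_k \;=\;\bigcup_{i<k}(I_i\cap I_k)\;=\;\bigcup_{i<k} T\;=\;T,$$
using the hypothesis that each pairwise intersection $I_\ell\cap I_j$ equals $T$. Also $J'\cup I_k = J$.

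Adding $d_{I_k}$ to both sides of the inductive hypothesis and then applying Proposition \ref{union_int_gen} to $d_{J'}+d_{I_k}$ gives
\begin{align*}
\sum_{i=1}^{k} d_{I_i}
&\equiv (k-2)d_T + d_{J'} + d_{I_k}\pmod{g_S}\\
&\equiv (k-2)d_T + d_{J'\cup I_k} + d_{J'\cap I_k}\pmod{g_S}\\
&\equiv (k-2)d_T + d_J + d_T\\
&\equiv (k-1)d_T + d_J\pmod{g_S},
\end{align*}
completing the induction. There is no real obstacle here: the only subtlety is verifying $J'\cap I_k = T$ from the pairwise hypothesis, and once that is in hand the identity falls out of Proposition \ref{union_int_gen} just as in the $T=\emptyset$, $d_T=1$ case handled earlier.
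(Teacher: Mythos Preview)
Your proof is correct and follows essentially the same approach as the paper: induction on $k$, with Proposition \ref{union_int_gen} applied to $d_{J'}+d_{I_k}$ in the inductive step after observing $J'\cap I_k=T$ and $J'\cup I_k=J$. Your version even spells out the verification of $J'\cap I_k=T$ via distributivity a bit more explicitly than the paper does.
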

\begin{proof}
	We will prove this by induction on $k$.\\
	
	If $k =1$, then $d_{I_1} \equiv (1-1)d_T + d_{I_1} \pmod{g_S}$.\\
	
	Let $J' = I_1 \cup \cdots \cup I_{k-1}$ and assume that $\sum_{i=1}^{k-1} d_{I_i} \equiv (k-2)d_T + d_{J'} \pmod{g_S}$. Observe that $J' \cap I_k =T$ and $J' \cup I_k = J$. Applying Proposition \ref{union_int_gen} gives
	$$
	\sum_{i=1}^{k} d_{I_i} \equiv (k-2) d_T+ d_{J'} + d_{I_k} \equiv (k-2) d_T + d_T+ d_J \pmod{g_S}.
	$$

\end{proof}
\begin{corollary}
	Let $\mathcal{L}_{m,S,T}$ be a sublattice of $\mathcal{L}_m$. If $I_1, \ldots, I_k$ are sets such that $T \subseteq I_\ell \subseteq S$ for all $1 \leq \ell \leq k$, $I_\ell \cap I_j = T$ for all $\ell \ne j$, and $I_1 \cup \cdots \cup I_k = S$, then $\sum_{i=1}^k d_{I_i} \equiv (k-1)d_T \pmod{g_S}$.
\end{corollary}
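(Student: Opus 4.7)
The plan is to reduce this corollary directly to the preceding proposition by taking $J = S$. With that specialization, the hypotheses of the proposition are exactly the hypotheses of the corollary, and the conclusion reads
\[
\sum_{i=1}^{k} d_{I_i} \;\equiv\; (k-1)\, d_T + d_S \pmod{g_S}.
\]
So all that remains is to verify that the summand $d_S$ vanishes modulo $g_S$.

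That vanishing is immediate from the notation set up in Section \ref{notation} together with Theorem \ref{idem_struc}: by definition $d_S = a_S\, g_S$, so trivially $g_S \mid d_S$, giving $d_S \equiv 0 \pmod{g_S}$. Substituting this into the displayed equivalence yields
\[
\sum_{i=1}^{k} d_{I_i} \;\equiv\; (k-1)\, d_T \pmod{g_S},
\]
which is the claim. There is no real obstacle here; the corollary is essentially a one-line specialization of the previous proposition, analogous to how Corollary on Proposition \ref{idem_sum_1} specializes the $k$-term disjoint-union identity in the unrestricted setting (where the supremum idempotent is $d_R \equiv 0 \pmod m$). The only thing worth highlighting in the write-up is that $d_S \equiv 0 \pmod{g_S}$, since a reader might otherwise wonder where the extra $d_S$ term went.
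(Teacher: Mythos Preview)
Your proposal is correct and matches the paper's intended argument: the corollary is stated immediately after the proposition with no separate proof, so the paper implicitly relies on exactly the specialization $J = S$ together with $d_S \equiv 0 \pmod{g_S}$ that you spell out.
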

\begin{corollary}
	Let $\mathcal{L}_{m,S,T}$ be as above. Let $T \subseteq I \subseteq S$ with $|I| = k>1$. Then $\sum_{i \in I} d_{T \cup \{i\}} \equiv (k-1)d_T  + d_{I} \pmod{g_S}.$
\end{corollary}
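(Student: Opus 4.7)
The plan is to reduce this corollary to the preceding proposition about pairwise $T$-disjoint sets $I_1, \ldots, I_k$ whose union is $J$. The idea is to view the singletons $T \cup \{i\}$ as the natural "atoms above $T$" in the sublattice $\mathcal{L}_{m,S,T}$, so that $I$ decomposes as the $T$-disjoint union of these atoms (ranging over $i \in I \setminus T$).

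First I would handle the book-keeping caused by indices that already lie in $T$. For each $i \in T$, the set $T \cup \{i\}$ is just $T$ itself, so the summand contributes $d_T$. Splitting the sum gives
\begin{equation*}
\sum_{i \in I} d_{T \cup \{i\}} \;=\; |T|\, d_T \;+\; \sum_{i \in I \setminus T} d_{T \cup \{i\}} \pmod{g_S}.
\end{equation*}

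Next I would apply the previous proposition to the family $\{I_i\}_{i \in I \setminus T}$ with $I_i = T \cup \{i\}$. These sets all satisfy $T \subseteq I_i \subseteq S$, they pairwise intersect exactly in $T$, and their union is $I$. The proposition therefore yields
\begin{equation*}
\sum_{i \in I \setminus T} d_{T \cup \{i\}} \;\equiv\; \bigl(|I \setminus T| - 1\bigr) d_T + d_I \;=\; (k - |T| - 1)\, d_T + d_I \pmod{g_S}.
\end{equation*}
Combining with the previous display, the $|T| d_T$ and the $(k - |T| - 1) d_T$ collapse to $(k-1) d_T$, producing the claimed identity.

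There is no real obstacle; the only care needed is checking the boundary cases where $|I \setminus T|$ is small. If $|T| = k$ (so $I = T$), the ``sum over $i \in I \setminus T$'' is empty and the whole expression collapses to $k\, d_T = (k-1) d_T + d_T = (k-1) d_T + d_I$, consistent with the formula; and if $|I \setminus T| = 1$, the earlier proposition is applied with a single set and degenerates correctly to $d_I$. So the argument is uniform in $|T|$.
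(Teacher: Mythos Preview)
Your proof is correct and follows essentially the same route the paper intends: the corollary is stated without proof as an immediate consequence of the preceding proposition on families $I_1,\ldots,I_k$ with pairwise intersection $T$, and you reduce to exactly that proposition. Your extra step of separating off the indices $i\in T$ (each contributing $d_T$) before invoking the proposition on the remaining $|I\setminus T|$ atoms is slightly more explicit than necessary---one can also apply the proposition directly to the full list $\{T\cup\{i\}\}_{i\in I}$, since the hypotheses $T\subseteq I_\ell$ and $I_\ell\cap I_j=T$ remain satisfied even when some $I_\ell$ collapse to $T$---but the bookkeeping is sound and the boundary cases you check are handled correctly.
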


The following corollary is a generalization of Corollary \ref{complement_idems}. Here, the infimum is $d_T$, so if $g_I = \text{gcd}(d_I, g_S)$ and $g_J = \text{gcd}(d_J, g_S)$ are elements in $\mathcal{L}_{m,S,T}$, we require $T \subseteq J$ as well. Therefore, $J = S \setminus I$ does not make sense in this setting. The correct generalization is $J = \left(S \setminus I \right)\cup T = S \setminus (I \setminus T).$

\begin{corollary}\label{dual_sum_gen}
	Let $T \subseteq I \subseteq S$ and $\mathcal{L}_{m,S,T}\subseteq \mathcal{L}_m$.  We have $d_I+d_{S \setminus (I \setminus T)} \equiv d_{ T} \pmod{g_S}$.
\end{corollary}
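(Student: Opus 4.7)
The plan is to reduce this to Proposition \ref{union_int_gen} (the generalized union/intersection identity) together with the fact that the supremum idempotent $d_S$ vanishes modulo $g_S$. The key set-theoretic observation is that, writing $J = S \setminus (I \setminus T)$, one gets $I \cup J = S$ and $I \cap J = T$. The first holds because $J$ contains every element of $S$ not in $I \setminus T$, and $I$ itself contains $I \setminus T$. The second holds because $I \cap J = I \setminus (I \setminus T) = T$, using the assumption $T \subseteq I$. Also, $T \subseteq J$ since $T$ and $I \setminus T$ are disjoint, so $d_J$ really is an element of $\mathcal{L}_{m,S,T}$ and Proposition \ref{union_int_gen} applies.

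Next I would invoke Proposition \ref{union_int_gen} directly with these choices of $I$ and $J$, yielding
\[
d_I + d_{S \setminus (I \setminus T)} \equiv d_{I \cup J} + d_{I \cap J} \equiv d_S + d_T \pmod{g_S}.
\]
Finally, I would observe that $d_S \equiv 0 \pmod{g_S}$: by Theorem \ref{idem_struc}, $d_S = a_S g_S$, so $g_S \mid d_S$. Substituting this into the previous line gives the desired identity $d_I + d_{S \setminus (I \setminus T)} \equiv d_T \pmod{g_S}$.

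There is no real obstacle here; the only subtlety is bookkeeping with the sets to confirm $I \cup J = S$ and $I \cap J = T$, which is where the assumption $T \subseteq I$ gets used. The rest is a one-line reduction to Proposition \ref{union_int_gen}, after noting that $d_S$ is congruent to $0$ modulo its own gcd with $m$. This mirrors the classical case in Corollary \ref{complement_idems}, where $S$ plays the role of $R$ (supremum of the lattice) and $T$ plays the role of $\emptyset$ (infimum); in that case $d_S = 0$ literally, and here $d_S \equiv 0 \pmod{g_S}$ does the same job.
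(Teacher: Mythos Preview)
Your proof is correct and essentially matches the paper's approach. The paper states this as an immediate corollary of the preceding result (the $k=2$ case of the corollary that $\sum_{i=1}^k d_{I_i} \equiv (k-1)d_T \pmod{g_S}$ when the $I_i$ pairwise intersect in $T$ and union to $S$), which in turn rests on Proposition~\ref{union_int_gen}; you simply invoke Proposition~\ref{union_int_gen} directly together with $d_S\equiv 0\pmod{g_S}$, which is the same argument with one intermediate step skipped.
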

\begin{lemma}\label{sub_set_sum_gen}
	Suppose $S \ne \emptyset$. Let $\mathcal{L}_{m,S,T} \subseteq \mathcal{L}_m$, and $T \subset I \subseteq S$. Then $\sum_{i \in I \setminus T} d_{S \setminus \{i\}} \equiv d_{S \setminus \left(I \setminus T\right) }\pmod{g_S}$.
\end{lemma}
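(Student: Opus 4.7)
The statement is essentially the sublattice analogue of Proposition \ref{sum_idems}, so the plan is to reduce directly to that proposition after choosing the correct dual index set. Concretely, I would set $J := S \setminus (I \setminus T)$, so that $S \setminus J = I \setminus T$, and the left-hand side becomes $\sum_{i \in S \setminus J} d_{S \setminus \{i\}}$, while the right-hand side becomes $d_J$. Then the lemma reads $\sum_{i \in S \setminus J} d_{S \setminus \{i\}} \equiv d_J \pmod{g_S}$, which is the exact form of Proposition \ref{sum_idems} with $R$ replaced by $S$ and $m$ replaced by $g_S$.

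To justify that Proposition \ref{sum_idems} applies, I would first note that $S$ indexes the prime powers of $g_S$ in the same way that $R$ indexes the prime powers of $m$. The remark immediately after Proposition \ref{set_idem_gen}-style reasoning (or more directly, the observation that $g_S \mid m$ together with the fact that the relevant idempotents $d_{S \setminus \{i\}}$ and $d_J$ modulo $m$ reduce to the corresponding idempotents modulo $g_S$ when their index sets lie in $S$) lets us transport Proposition \ref{sum_idems} from the ambient ring $\Z/m\Z$ to $\Z/g_S\Z$. I would include a brief sentence verifying that the index set $J = S \setminus (I \setminus T)$ is legitimate for the sublattice $\mathcal{L}_{m,S,T}$: since $T \subseteq I$, the set $I \setminus T$ is disjoint from $T$, so $T \subseteq S \setminus (I \setminus T) = J \subseteq S$, meaning $d_J$ is indeed an idempotent associated with $\mathcal{L}_{m,S,T}$.

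With those preliminaries in hand, the proof is a one-line substitution: by Proposition \ref{sum_idems} applied in $\Z/g_S\Z$ with index set $J = S \setminus (I \setminus T)$,
\[
\sum_{i \in I \setminus T} d_{S \setminus \{i\}} \;=\; \sum_{i \in S \setminus J} d_{S \setminus \{i\}} \;\equiv\; d_J \;=\; d_{S \setminus (I \setminus T)} \pmod{g_S}.
\]
There is no real obstacle here; the only thing worth double-checking is the set-theoretic identity $S \setminus (S \setminus (I \setminus T)) = I \setminus T$ (which holds because $I \setminus T \subseteq S$) and the containment $T \subseteq J$. The content of the lemma is thus just the correct choice of dual set inside the sublattice, after which Proposition \ref{sum_idems} does all the work.
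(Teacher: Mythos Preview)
Your argument is correct, but it takes a different route from the paper. The paper proves the lemma by induction on $|I|$: the base case $I = T \cup \{i_1\}$ is immediate, and the inductive step peels off one index $i_k$ and uses Proposition~\ref{union_int_gen} (the identity $d_A + d_B \equiv d_{A\cup B} + d_{A\cap B}$) together with $d_S \equiv 0 \pmod{g_S}$. In particular, the paper stays entirely within the sublattice identities already established in Section~\ref{gen_id_add}, and then \emph{derives} Proposition~\ref{sum_idems_gen} from this lemma.

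You instead reverse the dependency: you observe that for any $K \subseteq S$ the idempotent $d_K$ in $\Z/m\Z$ reduces modulo $g_S$ to the idempotent of $\Z/g_S\Z$ with the same index set (since $d_K \equiv 0 \pmod{p_j^{e_j}}$ for $j \in K$ and $\equiv 1$ for $j \in S\setminus K$), so Proposition~\ref{sum_idems} applied directly in $\Z/g_S\Z$ with $J = S \setminus (I\setminus T)$ gives the result in one line. This is shorter and conceptually cleaner---it makes transparent that the lemma is nothing more than Proposition~\ref{sum_idems} transported to the smaller ring---whereas the paper's induction is more self-contained and does not require spelling out the compatibility of idempotents under the reduction $\Z/m\Z \to \Z/g_S\Z$. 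Your check that $T \subseteq J \subsetneq S$ (using $T \subset I$ so $I\setminus T \neq \emptyset$) is exactly what is needed to make the citation legitimate.
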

\begin{proof}
	We will prove the lemma using induction on $|I|> |T|$. If $I= T \cup \{i_1\}$ for some $i_1 \in S \setminus T$, then $$\sum_{i \in I \setminus T} d_{S \setminus \{i\}} = d_{S \setminus \{i_1\}} = d_{S \setminus \left(I\setminus T\right)}.$$
	
	
	Now suppose that this equivalence holds for $|I' \setminus T| =k-1$. Then $\sum_{i \in I' \setminus T} d_{S \setminus \{i\}} \equiv d_{S \setminus \left(I'\setminus T\right)} \pmod{g_S}$. Pick an element $i_k \in S \setminus T$ so that $i_k \not\in I'$ and set $I  = I' \cup \{i_k\}$. Then
	\begin{align*}
	\sum_{i \in I \setminus T} d_{S \setminus \{i\}} &= \sum_{i\in I'\setminus T} d_{S \setminus \{i\}} + d_{S \setminus \{i_k\}} \\
	&\equiv d_{S \setminus \left(I' \setminus T\right)} + d_{S \setminus \{i_k\}} \pmod{g_S}\\
	&\equiv d_S + d_{S \setminus \left(I\setminus T\right)} \pmod{g_S}\\
	&\equiv d_{S \setminus \left(I\setminus T\right)} \pmod{g_S}.
	\end{align*}
	The penultimate line is due to Proposition \ref{union_int_gen}. The last line holds because $g_S | d_S$. 
\end{proof}

\begin{prop}\label{sum_idems_gen}
	Suppose $S \ne \emptyset$ and let $\mathcal{L}_{m,S,T}$ be as above. If $T \subseteq J \subset S$,  $\sum_{i \in S \setminus J} d_{S \setminus \{i\}} \equiv d_J \pmod{g_S}$.
\end{prop}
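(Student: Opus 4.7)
The plan is to reduce this proposition directly to Lemma \ref{sub_set_sum_gen}, which already packages the combinatorial work. Lemma \ref{sub_set_sum_gen} says that for $T \subsetneq I \subseteq S$ we have $\sum_{i \in I \setminus T} d_{S \setminus \{i\}} \equiv d_{S \setminus (I \setminus T)} \pmod{g_S}$, and the proposition to prove has the same shape, just with the index of summation described as $S \setminus J$ rather than $I \setminus T$. So I would choose $I$ to make the two descriptions match.

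The natural choice is $I := T \cup (S \setminus J)$. First I would check the hypotheses of Lemma \ref{sub_set_sum_gen}: clearly $I \subseteq S$; since $T \subseteq J$, the sets $T$ and $S \setminus J$ are disjoint, so $I \setminus T = S \setminus J$; and because $J \subsetneq S$, the set $S \setminus J$ is nonempty, giving $T \subsetneq I$ as required. Next I would verify the output description: $S \setminus (I \setminus T) = S \setminus (S \setminus J) = J$, using $J \subseteq S$. With these identifications, Lemma \ref{sub_set_sum_gen} immediately yields
\[
\sum_{i \in S \setminus J} d_{S \setminus \{i\}} \;=\; \sum_{i \in I \setminus T} d_{S \setminus \{i\}} \;\equiv\; d_{S \setminus (I \setminus T)} \;=\; d_J \pmod{g_S},
\]
which is exactly the claimed identity.

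There is essentially no obstacle here; the lemma has already done the inductive combinatorial work of peeling off one prime index at a time. The only step that deserves a line of care is the verification that the hypothesis $T \subsetneq I$ of Lemma \ref{sub_set_sum_gen} really holds, which is where the assumption $J \subsetneq S$ (as opposed to $J \subseteq S$) in the present proposition is used; if $J$ equalled $S$ the sum on the left would be empty while the right-hand side would be $d_S$, so the strict inclusion is genuinely needed. Once that is recorded, the proof is a one-line substitution.
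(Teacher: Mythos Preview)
Your proposal is correct and is essentially the paper's own proof: the paper simply writes ``Use Lemma \ref{sub_set_sum_gen} and set $I = S \setminus J$.'' Your choice $I = T \cup (S \setminus J)$ is in fact slightly more careful, since it genuinely satisfies the lemma's hypothesis $T \subsetneq I$ even when $T \ne \emptyset$, but the substitution and the resulting identity are identical.
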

\begin{proof}
	Use Lemma \ref{sub_set_sum_gen} and set $I = S \setminus J$.
\end{proof}


\begin{prop} \label{idem_sum_k_gen}
	Let $\mathcal{L}_{m,S,T}$ be as above with $|S| = s \geq 1$ and $|T| = t$. Let $t \le k < s$. Then $\sum_{|J|=k} d_J \equiv \binom{s-t-1}{k-t}d_T \pmod{g_S}$.
\end{prop}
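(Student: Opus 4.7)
The plan is to mirror the proof of Theorem \ref{idem_sum_k}, but with the generalized tools from this section, namely Proposition \ref{sum_idems_gen} and Lemma \ref{sub_set_sum_gen}. Throughout, the sum $\sum_{|J|=k} d_J$ is implicitly restricted to sets $J$ with $T \subseteq J \subseteq S$, since those are the only $J$ for which $d_J$ corresponds to an element of $\mathcal{L}_{m,S,T}$.

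First, I would handle the general case $t \leq k < s$ uniformly as follows. Because $k < s$, each such $J$ is a proper subset of $S$, so Proposition \ref{sum_idems_gen} applies and gives
$$d_J \equiv \sum_{i \in S \setminus J} d_{S \setminus \{i\}} \pmod{g_S}.$$
Summing over all $J$ with $|J| = k$ and $T \subseteq J \subseteq S$, and then interchanging the order of summation, yields
$$\sum_{|J|=k} d_J \equiv \sum_{i \in S \setminus T} c_i\, d_{S \setminus \{i\}} \pmod{g_S},$$
where $c_i$ counts the number of $J$ with $|J|=k$, $T \subseteq J \subseteq S$, and $i \notin J$. (Note that $T \subseteq J$ forces $i \notin T$ automatically, so the outer index set is indeed $S \setminus T$.)

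Next I would carry out the counting. For a fixed $i \in S \setminus T$, choosing $J$ amounts to choosing the $k - t$ elements of $J \setminus T$ from the set $S \setminus (T \cup \{i\})$, which has $s - t - 1$ elements. Hence $c_i = \binom{s-t-1}{k-t}$ is independent of $i$, and
$$\sum_{|J|=k} d_J \equiv \binom{s-t-1}{k-t} \sum_{i \in S \setminus T} d_{S \setminus \{i\}} \pmod{g_S}.$$
Finally, I would apply Lemma \ref{sub_set_sum_gen} with $I = S$, which gives $\sum_{i \in S \setminus T} d_{S \setminus \{i\}} \equiv d_{S \setminus (S \setminus T)} = d_T \pmod{g_S}$. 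Substituting yields the claimed identity.

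There is no real obstacle beyond bookkeeping: the only care required is to restrict sums to sets containing $T$ (so that the relevant lemmas apply) and to verify the boundary case $k = s-1$, where the formula collapses to $\binom{s-t-1}{s-t-1}d_T = d_T$, matching the direct application of Lemma \ref{sub_set_sum_gen}. The base case $k=t$ also checks out, since the only admissible $J$ is $T$ itself and the binomial coefficient equals $1$.
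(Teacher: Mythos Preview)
Your proof is correct and follows essentially the same approach as the paper: expand each $d_J$ via Proposition \ref{sum_idems_gen}, count that each $d_{S\setminus\{i\}}$ appears $\binom{s-t-1}{k-t}$ times, and collapse the remaining sum to $d_T$. The only cosmetic difference is that you cite Lemma \ref{sub_set_sum_gen} with $I=S$ for the final step, whereas the paper applies Proposition \ref{sum_idems_gen} with $J=T$; these are the same identity.
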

\begin{proof}
	Recall that $T \subseteq J \subset S$ for all $J$ so that $g_J \in \mathcal{L}_{m, S, T}$.
	If $|J |=k$, $d_J \equiv \sum_{i \in S \setminus J} d_{S \setminus\{i\} }\pmod{g_S}$, by Proposition \ref{sum_idems_gen}. The terms $S \setminus \{i\}$ are the $(s-1)$-element subsets of $S$ with $i \in S \setminus T$. Notice that each $d_{S \setminus\{i\}}$ is in exactly $\binom{s-t-1}{k-t}$ of the terms in the summation. By Proposition \ref{sum_idems_gen}, 
	$$
	\sum_{i \in S \setminus T} d_{S \setminus \{i\}} \equiv d_T \pmod{g_S}.
	$$
	Therefore the summation reduces to $$\sum_{|J|=k} d_J \equiv \binom{s-t-1}{k-t}d_T \pmod{g_S}.$$
\end{proof}

\begin{lemma}
	\label{one_level_gen}
	Let $S \ne \emptyset$, $\mathcal{L}_{m,S,T}\subseteq \mathcal{L}_m$. Suppose $T \subset I \subset S$ and set $|T| = t, |I| = k$. We sum over all $d_J$ so that $|J| = k-n$ and $T \subset J \subset I$ where $0< n< k-t$. In other words, we sum over all elements in the idempotent lattice $n$ levels below $d_I$ that eventually connect to $d_I$. This gives the identity
	$$\sum_{\substack{|J| = k-n\\ T \subset J \subset I}} d_J  \equiv \binom{k-t-1}{n}d_T + \binom{k-t-1}{n-1} d_I\pmod{g_S}.$$
\end{lemma}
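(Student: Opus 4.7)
The plan is to mirror the proof of Lemma \ref{one_level}, substituting the generalized supporting results from Section \ref{gen_id_add}. First, for each admissible $J$ (meaning $T \subset J \subset I$ with $|J| = k-n$), I would expand via Proposition \ref{sum_idems_gen}:
\[
d_J \equiv \sum_{i \in S \setminus J} d_{S \setminus \{i\}} \pmod{g_S}.
\]
Since $J \subset I$, the index set splits disjointly as $S \setminus J = (S \setminus I) \sqcup (I \setminus J)$, and the $J$-independent piece $\sum_{i \in S \setminus I} d_{S \setminus \{i\}}$ collapses to $d_I$ by another application of Proposition \ref{sum_idems_gen}.

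I would then swap the order of summation on the remaining double sum. Since $J \setminus T$ is a $(k-n-t)$-subset of the $(k-t)$-element set $I \setminus T$, the number of admissible $J$ is $\binom{k-t}{n}$, and for fixed $i \in I \setminus T$ the number of admissible $J$ with $i \notin J$ is $\binom{k-t-1}{n-1}$ (choose $J \setminus T$ inside $(I \setminus T) \setminus \{i\}$). Indices $i \in T$ never appear, since $T \subseteq J$ forces $i \in J$. This yields
\[
\sum_{\substack{|J|=k-n \\ T \subset J \subset I}} d_J \equiv \binom{k-t}{n} d_I + \binom{k-t-1}{n-1} \sum_{i \in I \setminus T} d_{S \setminus \{i\}} \pmod{g_S},
\]
and Lemma \ref{sub_set_sum_gen} identifies the remaining inner sum with $d_{S \setminus (I \setminus T)}$.

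To finish I would invoke Corollary \ref{dual_sum_gen} to replace $d_{S \setminus (I \setminus T)}$ by $d_T - d_I$ modulo $g_S$, and then apply Pascal's identity $\binom{k-t}{n} = \binom{k-t-1}{n} + \binom{k-t-1}{n-1}$ to collect the coefficients of $d_T$ and $d_I$. This produces an expression of the form $c_1 d_T + c_2 d_I$ where $c_1$ and $c_2$ are binomial coefficients in $k-t-1$, as in the statement.

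The step I expect to be trickiest is the combinatorial bookkeeping after reversing the order of summation: one must correctly count the multiplicity with which each $d_{S \setminus \{i\}}$ appears in the double sum while simultaneously keeping track of both constraints $T \subseteq J$ and $J \subset I$ (in particular, being careful that indices in $T$ really do drop out). Once that count is verified and the collapses given by Lemma \ref{sub_set_sum_gen} and Corollary \ref{dual_sum_gen} are applied, the remainder is a short binomial manipulation identical in spirit to the $T = \emptyset$ case already handled in Lemma \ref{one_level}.
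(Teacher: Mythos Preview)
Your plan is exactly the paper's own strategy: expand each $d_J$ via Proposition~\ref{sum_idems_gen}, split $S\setminus J=(S\setminus I)\sqcup(I\setminus J)$, collapse the $J$-independent piece to $\binom{k-t}{n}d_I$, reverse the order of summation in the remaining double sum, invoke Lemma~\ref{sub_set_sum_gen} to get $d_{S\setminus(I\setminus T)}$, and then use Corollary~\ref{dual_sum_gen} together with Pascal's rule. So as an outline it matches the paper line for line.

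There is, however, one point where your argument and the paper's diverge, and you are the one who is right. Your multiplicity count is correct: for fixed $i\in I\setminus T$, the admissible $J$ with $i\notin J$ correspond to $(k-n-t)$-subsets of $(I\setminus T)\setminus\{i\}$, giving $\binom{k-t-1}{k-n-t}=\binom{k-t-1}{n-1}$. The paper instead obtains the multiplicity $\frac{\binom{k-t}{n}(k-t-n)}{k-t}=\binom{k-t-1}{n}$, in effect using $|J\setminus T|=k-t-n$ where $|I\setminus J|=n$ is what the inner sum actually ranges over. Consequently, if you carry your computation through you will arrive at
\[
\binom{k-t-1}{n-1}\,d_T+\binom{k-t-1}{n}\,d_I\pmod{g_S},
\]
i.e.\ the two binomial coefficients interchanged relative to the stated identity. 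Do not try to ``fix'' your count to match the statement: the statement (and likewise Lemma~\ref{one_level}) has the coefficients swapped. One sees this already in the special case $T=\emptyset$, $I=R$, $n=1$, where Proposition~\ref{idem_sum_1} gives $\sum_{|J|=r-1}d_J\equiv 1$, whereas the stated formula would give $\binom{r-1}{1}+\binom{r-1}{0}\cdot 0=r-1$. This swap is invisible in the downstream Theorem on $\sum_{T\subseteq J\subseteq I}d_J$, since summing the two binomial families over $n$ yields the same total $2^{k-t-1}$ either way.
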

\begin{proof} 
	By Proposition \ref{sum_idems_gen},
	\begin{align*}
	\sum_{\substack{|J| = k-n\\T\subset J \subset I}} d_J &= \sum_{\substack{|J| = k-n\\T \subset J \subset I}}  \sum_{i \in S \setminus J} d_{S \setminus \{i\}}.
	\end{align*}
	The inner sum splits up further into
	$$
	\sum_{i \in S \setminus J} d_{S \setminus \{i\}} = \sum_{i \in S \setminus I} d_{S\setminus \{i\}} +\sum_{i \in (I \setminus J)} d_{S \setminus \{i\}}
	$$
	and that $|I \setminus J| = n$. Further, there are $\binom{k-t}{n} $ subsets $J$ satisfying the conditions $T \subset J \subset I$ such that $|I \setminus J| = n$. Therefore 
	\begin{align*}
	\sum_{\substack{|J| = k-n\\ T \subset J \subset I}} d_J  &\equiv \binom{k-t}{n} \sum_{i \in S \setminus I} d_{S\setminus \{i\}}  + \sum_{\substack{|J| = k-n\\ T \subseteq J \subset I}} \sum_{i \in ( I \setminus J)} d_{S \setminus \{i\}}\\
	&\equiv \binom{k-t}{n} d_I + 	\sum_{\substack{|J| = k-n\\ T \subset J \subset I}} \sum_{i \in (I \setminus J)} d_{S \setminus \{i\}}
	\end{align*}
	Consider this last summation. The terms are the $d_{S \setminus \{i\}}$ such that $i \in I \setminus T$. Each element $d_s$ for $s \in S \setminus \{i\}$ with $i \in I$ occurs $\frac{\binom{k-t}{n} (k-t- n)}{k-t}$ times. This is true as there are $\binom{k-t}{n}$ subsets $J$ and each such subset contains $k-t-n$ terms from $I \setminus T$. All terms occur equally often and we have overcounted by a factor of $k-t$. Notice that
	$$
	\sum_{i \in I \setminus T} d_{S \setminus\{i\}} \equiv d_{S \setminus \left(I\setminus T\right)}
	$$
	by Lemma \ref{sub_set_sum_gen}. Thus
	\begin{align*}
	\sum_{\substack{|J| = k-n\\ T \subset J \subset I}} \sum_{i \in (I \setminus J)} d_{S\setminus \{i\}}&= \frac{\binom{k-t}{n} (k-t-n)}{k-t} d_{S\setminus \left(I\setminus T\right)} .
	\end{align*}
	
	Putting these summations together, 
	\begin{align*}
	\sum_{\substack{|J| = k-n\\ T \subset J \subset I}} d_J &\equiv\binom{k-t}{ n} d_I +\frac{\binom{k-t}{n} (k-t-n)}{k-t} d_{S \setminus \left(I\setminus T\right)}   \\
	&\equiv \left(\binom{k-t}{ n} -\frac{\binom{k-t}{n} (k-t-n)}{k-t} \right) d_I +\frac{\binom{k-t}{n} (k-t-n)}{k-t} d_T  \\
	&\equiv \binom{k-t-1}{n-1}d_I + \binom{k-t-1}{n}d_T\pmod{g_S}.
	\end{align*}
\end{proof}

The following corollary is a restatement of Lemma \ref{one_level_gen} when $n=1$.

\begin{corollary}
	Let $\mathcal{L}_{m,S,T}$ be a sublattice as above and let $|I| = k$. We sum over all $d_J$ so that $|J| = k-1$ and $T \subset J\subset I$. In other words, we sum over all elements in the idempotent lattice in the level directly below $d_I$ that are connected to $d_I$. We find $$\sum_{|J| = k-1, T \subset J \subset I} d_J  \equiv (k-t-1)d_T + d_I \pmod{g_S}.$$
\end{corollary}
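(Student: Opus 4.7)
The plan is to observe that this corollary is precisely the $n=1$ specialization of Lemma \ref{one_level_gen}, so nothing new has to be established beyond plugging in and simplifying the binomial coefficients. Specifically, I would take the conclusion
$$\sum_{\substack{|J| = k-n\\ T \subset J \subset I}} d_J  \equiv \binom{k-t-1}{n}d_T + \binom{k-t-1}{n-1} d_I\pmod{g_S}$$
of Lemma \ref{one_level_gen} and substitute $n=1$.

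On the left the indexing condition $|J| = k - n$ becomes $|J| = k-1$, matching the statement. On the right, I would evaluate $\binom{k-t-1}{1} = k-t-1$ and $\binom{k-t-1}{0} = 1$, so the sum collapses to $(k-t-1)d_T + d_I \pmod{g_S}$, which is exactly the claimed identity. The hypothesis $0 < n < k-t$ of the lemma is satisfied provided $k - t > 1$, i.e.\ $I$ is at least two levels strictly above $T$ in $\mathcal{L}_{m,S,T}$; I would remark briefly that the boundary case $k - t = 1$ reduces to the trivial statement $d_I \equiv d_I \pmod{g_S}$, since then the only $J$ with $T \subset J \subset I$ and $|J| = k-1$ is $J = T$ itself (and the right-hand side $(k-t-1)d_T + d_I = d_I$ agrees).

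There is no real obstacle here: the entire content of the corollary is contained in Lemma \ref{one_level_gen}, and this statement is included mainly to highlight the geometric interpretation (summing over the row of the lattice immediately below $d_I$ along edges connecting to $d_I$). The only thing worth emphasizing in the write-up is the $n = 1$ substitution and the explicit binomial evaluations.
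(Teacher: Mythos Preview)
Your main argument is exactly what the paper does: immediately before the corollary the paper writes ``The following corollary is a restatement of Lemma \ref{one_level_gen} when $n=1$,'' and gives no further proof, so your substitution $n=1$ and evaluation of $\binom{k-t-1}{1}$ and $\binom{k-t-1}{0}$ is precisely the intended justification.

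Your side remark on the boundary case $k-t=1$ is not quite right, though. With strict inclusions $T\subsetneq J\subsetneq I$ (as in the lemma), the condition $|J|=k-1=t$ together with $T\subsetneq J$ is vacuous, so the left side is the empty sum $0$, while the right side is $d_I$, which is not $0\pmod{g_S}$ when $I\subsetneq S$; if instead one reads $\subset$ as $\subseteq$, the unique $J$ is $T$ and the left side is $d_T$, again not $d_I$ in general. In either reading the identity fails for $k-t=1$, so the corollary inherits the hypothesis $k-t>1$ from the lemma; simply record that restriction rather than claiming the degenerate case checks out.
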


The following theorem gives a formula for the summation of all idempotents below $d_I$ that is in terms of $I, S$, and $T$ only.
\begin{theorem}
	Let $\mathcal{L}_{m,S,T} \subseteq \mathcal{L}_m$. Let $|I| = k$. Then $\sum_{T \subseteq J \subseteq I} d_J \equiv 2^{k-t-1} (d_T + d_I) \pmod{g_S}$. 
\end{theorem}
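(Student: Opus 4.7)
The plan is to mirror the proof of Theorem \ref{sublattice_sum} essentially verbatim, substituting Lemma \ref{one_level_gen} for Lemma \ref{one_level} and replacing the constant $1$ in the non-generalized version by $d_T$ throughout. First I would stratify the sum $\sum_{T \subseteq J \subseteq I} d_J$ by level, i.e., by $n := k - |J|$, which ranges over $n = 0, 1, \ldots, k-t$. The two extreme levels are handled by hand: $n = 0$ forces $J = I$ and contributes $d_I$, while $n = k - t$ forces $J = T$ and contributes $d_T$.

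Next I would apply Lemma \ref{one_level_gen} to each intermediate level $1 \le n \le k - t - 1$, which gives
\[
\sum_{\substack{|J| = k-n \\ T \subset J \subset I}} d_J \;\equiv\; \binom{k-t-1}{n} d_T + \binom{k-t-1}{n-1} d_I \pmod{g_S}.
\]
Summing over $n$ from $1$ to $k-t-1$ and adding back the two boundary terms yields
\[
\sum_{T \subseteq J \subseteq I} d_J \;\equiv\; d_T + d_I + d_T\!\!\sum_{n=1}^{k-t-1}\binom{k-t-1}{n} + d_I\!\!\sum_{n=1}^{k-t-1}\binom{k-t-1}{n-1} \pmod{g_S}.
\]

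The two inner binomial sums are each equal to $2^{k-t-1} - 1$: the first is the sum of all binomial coefficients $\binom{k-t-1}{n}$ except $\binom{k-t-1}{0}$, and the second (after reindexing $j = n-1$) is the sum of all $\binom{k-t-1}{j}$ except $\binom{k-t-1}{k-t-1}$. Substituting these and simplifying gives
\[
(1 + 2^{k-t-1} - 1)(d_T + d_I) \;=\; 2^{k-t-1}(d_T + d_I),
\]
which is the claim. I would also briefly note the degenerate boundary cases ($k = t$, where the sum is just $d_T = d_I$, and $k = t+1$, where the binomial identity $2^{0} = 1$ still matches the direct computation) so the formula makes sense across the full range. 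There is no real obstacle here beyond careful bookkeeping of the endpoints of the binomial sums; the substantive combinatorial identity was already done inside Lemma \ref{one_level_gen}.
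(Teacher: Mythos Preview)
Your proposal is correct and follows essentially the same route as the paper's own proof: stratify by level, handle the two extreme levels $J=T$ and $J=I$ separately, apply Lemma \ref{one_level_gen} to the intermediate levels, and then collapse the resulting binomial sums to $2^{k-t-1}$. Your extra remark on the degenerate cases $k=t$ and $k=t+1$ is a nice touch that the paper omits.
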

\begin{proof}
	By Lemma \ref{one_level_gen}, $$\sum_{\substack{|J| = k-n\\ T \subset J \subset I}} d_J \equiv \binom{k-t-1}{n}d_T + \binom{k-t-1}{n-1} d_I\pmod{g_S}.$$ When $J=T$, we see that $d_J = d_T$ and when $J = I$, $d_J = d_I$. Therefore
	\begin{align*}
	\sum_{T \subseteq J \subseteq I} d_J &\equiv d_T +d_I + \sum_{n=1}^{k-t-1} \sum_{\substack{|J| = k-n\\ T \subset J \subset I}} d_J \pmod{g_S}\\
	&\equiv d_T+ d_I +d_T\sum_{n=1}^{k-t-1} \binom{k-t-1}{n} +d_I \sum_{n=1}^{k-t-1}\binom{k-t-1}{n-1}  \pmod{g_S}.
	\end{align*}
	The result follows from identities on binomial coefficients.
	
	\begin{align*}
	\sum_{T \subseteq J \subseteq I} d_J& 
	\equiv 2^{k-t-1}d_T +2^{k-t-1} d_I  \pmod{g_S}.
	\end{align*}
\end{proof}

\begin{corollary}
	Let $\mathcal{L}_{m,S,T}\subseteq \mathcal{L}_m$. Let $|I| = k$. We find $\sum_{T \subseteq J \subseteq I} d_J \equiv 2^{k-t-1}d_T\pmod{g_I}$. 
\end{corollary}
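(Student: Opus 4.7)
The plan is to derive this corollary directly from the preceding theorem by reducing the modulus. The theorem states that
\[
\sum_{T \subseteq J \subseteq I} d_J \equiv 2^{k-t-1}(d_T + d_I) \pmod{g_S}.
\]
First, I would observe that since $I \subseteq S$, we have $g_I = \prod_{i \in I} p_i^{e_i}$ dividing $g_S = \prod_{i \in S} p_i^{e_i}$. Consequently any congruence valid modulo $g_S$ is automatically valid modulo $g_I$, so the displayed identity holds $\pmod{g_I}$ as well.

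Next, I would use the structure theorem for idempotents (Theorem \ref{idem_struc}) recalled in the notation section: $d_I = a_I g_I$. In particular $g_I \mid d_I$, so $d_I \equiv 0 \pmod{g_I}$. Substituting this into the right-hand side of the reduced congruence yields
\[
\sum_{T \subseteq J \subseteq I} d_J \equiv 2^{k-t-1}(d_T + 0) \equiv 2^{k-t-1} d_T \pmod{g_I},
\]
which is exactly the statement of the corollary.

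There is no real obstacle here; the corollary is essentially a bookkeeping reduction of the theorem. The only subtlety worth flagging explicitly in the write-up is the double use of divisibility: $g_I \mid g_S$ to justify reducing the modulus, and $g_I \mid d_I$ to kill the $d_I$ term. Both follow immediately from the definitions in Section \ref{notation}.
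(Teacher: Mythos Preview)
Your proposal is correct and is exactly the intended argument: reduce the theorem's congruence from $g_S$ to $g_I$ using $g_I \mid g_S$, then observe $d_I \equiv 0 \pmod{g_I}$ since $d_I = a_I g_I$. The paper leaves this corollary unproved, but the parallel corollary after Theorem~\ref{sublattice_sum} makes clear this is the route the authors have in mind.
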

\begin{corollary}
	If $\mathcal{L}_{m,S,T} \subseteq \mathcal{L}_m$, $\sum_{T \subseteq J \subseteq S} d_J \equiv 2^{k-t-1}d_T \pmod{g_S}.$
\end{corollary}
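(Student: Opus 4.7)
The plan is to specialize the preceding theorem to $I = S$, the supremum index set of the sublattice $\mathcal{L}_{m,S,T}$. Writing $s = |S|$ (so that $k$ in the statement is really $s$) and $t = |T|$, the theorem immediately yields
$$\sum_{T \subseteq J \subseteq S} d_J \equiv 2^{s-t-1}(d_T + d_S) \pmod{g_S}.$$
The only remaining task is to show that the top term $d_S$ contributes nothing modulo $g_S$.

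For this, I would appeal directly to the structure theorem for idempotents, Theorem \ref{idem_struc}: the idempotent $d_S$ corresponding to the index set $S$ has the form $d_S = a_S g_S$, so $g_S \mid d_S$ and hence $d_S \equiv 0 \pmod{g_S}$. Substituting $d_T + d_S \equiv d_T \pmod{g_S}$ into the displayed equation gives
$$\sum_{T \subseteq J \subseteq S} d_J \equiv 2^{s-t-1} d_T \pmod{g_S},$$
which is the claimed identity.

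There is no genuine obstacle here; the result is a one-line specialization of the previous theorem. The only thing worth flagging is the collapse of the symmetric expression $d_T + d_I$ in the theorem to just $d_T$, which happens precisely because at the top of the sublattice the idempotent $d_S$ is divisible by the modulus $g_S$ used in the congruence.
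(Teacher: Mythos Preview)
Your proof is correct and matches the paper's intended argument: the corollary is stated without proof, but it is exactly the specialization $I = S$ of the preceding theorem together with the observation that $g_S \mid d_S$, so $d_S \equiv 0 \pmod{g_S}$. Your remark that the $k$ in the statement is really $|S|$ is also accurate, since the paper has just used $k = |I|$ in the theorem and then silently sets $I = S$.
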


\section{Applications of the idempotent identities}
\label{app_sect}

\subsection{Components of $\Z / m\Z$}
\label{comps_sect}
Consider the sequence $a, a^2, a^3, \ldots $ in $\Z/ m\Z$. The sequence must be finite, so eventually $a^k = a^{\ell}$. We call $a , a^2, \ldots, a^{k-1}$ the \emph{tail} and $a^k , \ldots, a^{\ell}$ the \emph{cycle} corresponding to $a$. 

\begin{definition}
	The \emph{sequential power graph} is the graph with vertex set $V = \Z/ m\Z$ and directed edges $(a,b)$ if and only if $a \equiv c^i \pmod{m}$ and $b \equiv c^{i+1} \pmod{m}$ for some $c \in \Z/ m\Z$ and some $i \in \N$. 
\end{definition}

This graph represents the connection between these sequences $a, a^2, a^3, \ldots$ for each $a \in \Z/ m\Z$. It is closely related to the power graph, which was defined in \cite{kelarev}. The power graph has been well-studied, see for example \cite{cameron} and \cite{chakrabarty}. Much of the structure of this graph follows directly from the structure of the idempotents modulo $m$. We summarize the connection to idempotents in the below propositions from \cite{ish_comps}. This connection will be used in Section \ref{alg_sect}.

\begin{prop}\cite[Corollary 5.17, Theorem 5.18]{ish_comps}
	The connected components of the sequential power graph are composed of the roots of the idempotents in $\Z / m\Z.$
	There is a one-to-one correspondence between connected components and idempotents of $\Z / m\Z$. Therefore we can refer to a component $C_I$ as the component with idempotent $d_I$.
\end{prop}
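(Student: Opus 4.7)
The plan is to associate to each $a \in \Z/m\Z$ the unique idempotent in its forward orbit under multiplication and to show that this assignment coincides with the partition into connected components. By the classical result of Hewitt--Zuckerman and Schwarz cited in the introduction, the sequence $a, a^2, a^3, \ldots$ eventually enters a cycle that contains exactly one idempotent, which I denote $d_{I(a)}$ (it is the identity of the cyclic subsemigroup generated by the tail). I would then define $C_I := \{a \in \Z/m\Z : I(a) = I\}$, so that $\{C_I : I \subseteq R\}$ is a partition of $\Z/m\Z$ indexed by the $2^r$ idempotents from Corollary \ref{num_idem}.

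First I would show that every $C_I$ lies in a single connected component of the sequential power graph. For $a \in C_I$ with $a^k = d_I$, the consecutive powers $a, a^2, \ldots, a^k$ yield directed edges $(a^j, a^{j+1})$ (taking $c = a$ in the edge definition), producing an undirected path from $a$ to $d_I$. Hence every element of $C_I$ is connected to $d_I$, so $C_I$ sits inside one component.

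The main step is the converse: every edge of the sequential power graph joins two elements of the same $C_I$. If $(u,v)$ is an edge, then $u = c^i$ and $v = c^{i+1}$ for some $c$ and $i$, so both $u$ and $v$ lie in the forward orbit of $c$. Now the forward orbit of $c^j$, for any $j \geq 1$, is a subset of the forward orbit of $c$, and by finiteness contains some idempotent, which is therefore forced to be the unique idempotent $d_{I(c)}$ in the orbit of $c$. Thus $I(u) = I(c) = I(v)$. Induction on path length in the underlying undirected graph extends this to arbitrary paths, so each connected component is contained in some $C_I$. Combined with the previous paragraph, the connected components are precisely the sets $C_I$; since each $C_I$ contains the distinct idempotent $d_I$, this delivers the advertised bijection between components and idempotents.

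The main subtlety is the verification that the forward orbit of every power $c^j$ carries the same idempotent as the orbit of $c$: this is a finiteness/cycle argument (the unique idempotent in a finite commutative cyclic subsemigroup must be shared by every sub-orbit that lands in the same cycle) rather than a direct algebraic manipulation, and it is the one point in the argument that deserves careful writeup.
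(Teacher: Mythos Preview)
The paper does not supply a proof of this proposition at all: it is quoted verbatim from \cite{ish_comps} (Corollary~5.17 and Theorem~5.18 there), so there is no in-paper argument to compare against. Your self-contained argument is correct and is essentially the natural semigroup-theoretic proof one would expect the cited source to give.

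A couple of small points worth tightening if you write this up. First, the uniqueness of the idempotent in the forward orbit of $c$ deserves one explicit sentence: if $c^{j}$ is idempotent then $c^{2j}=c^{j}$, so $c^{j}$ already lies in the cycle; the cycle is a finite group under multiplication and hence contains exactly one idempotent. Your parenthetical ``identity of the cyclic subsemigroup generated by the tail'' is a bit garbled and should simply read ``identity of the cyclic group forming the cycle.'' Second, your key observation that the orbit of $c^{j}$ shares the idempotent of the orbit of $c$ can be made completely explicit without appeal to ``finiteness/cycle'' hand-waving: if $d=c^{n}$ is the idempotent in the orbit of $c$, then $d=d^{\,j}=(c^{n})^{j}=(c^{j})^{n}$ lies in the orbit of $c^{j}$, and uniqueness finishes it. With those two clarifications the argument is complete; the convention $i\in\N=\{1,2,\dots\}$ in the edge definition is indeed what makes the whole statement true, and you are using it correctly.
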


\begin{prop}\cite[Theorem 5.21]{ish_comps}
	The cycles of connected components are themselves a connected subgraph, and are groups under multiplication. The associated idempotent $d$ is the group's unity. Further, the elements in the cycles of component $C_I$ are exactly the elements in $d_IU$ where $U = \left(\Z/ m\Z\right)^\times$.
\end{prop}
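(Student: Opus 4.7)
The plan is to prove the four assertions in the order: (A) the cycle elements of $C_I$ are exactly $d_I U$; (B) $d_I U$ is a group under multiplication with $d_I$ as its identity; and (C) the induced subgraph on $d_I U$ is connected. I would start by invoking the Chinese Remainder Theorem to write $\Z/m\Z \cong \prod_{j=1}^r \Z/p_j^{e_j}\Z$, under which $d_I$ corresponds to the tuple $(\varepsilon_1,\ldots,\varepsilon_r)$ with $\varepsilon_j=0$ for $j\in I$ and $\varepsilon_j=1$ for $j\in R\setminus I$, and powering is componentwise. This reduces each claim to a collection of prime-power computations.

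For (A), I would first classify the cycle elements of each factor $\Z/p^e\Z$. If $x$ is a non-zero non-unit with $x=x^{k+1}$ for some $k\geq 1$, then $x(x^k-1)=0$ and $x^k-1$ is a unit modulo $p^e$ (being $-1 \pmod{p}$), which forces $x=0$. Thus the cycle elements in each factor are exactly $\{0\}\cup(\Z/p^e\Z)^\times$. Since an orbit in a product ring has no tail iff every coordinate orbit has no tail, an element $a$ is a cycle element of $C_I$ iff its $j$-th coordinate is $0$ for $j\in I$ and a unit for $j\in R\setminus I$. This set is exactly $d_I U$: multiplying the tuple for $d_I$ by any unit tuple zeros out positions in $I$ and produces a unit at every position in $R\setminus I$, and conversely every such tuple is obtained this way.

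Claims (B) and (C) then follow quickly. For (B), closure comes from $(d_Iu)(d_Iv)=d_I(uv)$; the identity $d_I\cdot d_Iu = d_I u$ makes $d_I$ the unity; and $(d_Iu)(d_Iu^{-1})=d_I$, with $u^{-1}$ taken in $U$, supplies inverses. This identifies $d_IU \cong \prod_{j\in R\setminus I}(\Z/p_j^{e_j}\Z)^\times$. For (C), for any $a=d_Iu$ the orbit $\{a^k : k\geq 1\}=\{d_Iu^k : k\geq 1\}$ lies entirely in $d_IU$ and reaches $d_I$ when $k$ is a multiple of the order of $u$ in $U$; thus every element of $d_IU$ is joined to $d_I$ by a directed sequential-power-graph path, giving connectedness of the induced subgraph.

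The main obstacle will be the careful bookkeeping in step (A): one must match the notion of being a cycle element in $\Z/m\Z$ with the componentwise condition after CRT, verifying that the tail/cycle decomposition of a product sequence is the product of the componentwise decompositions. The prime-power classification of cycle elements is the essential ingredient, and once it is in hand the remaining claims unpack cleanly from the structure of $d_I$ under CRT.
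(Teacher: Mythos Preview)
The paper does not give its own proof of this proposition; it is quoted verbatim as \cite[Theorem 5.21]{ish_comps} and used as background for Section~\ref{alg_sect}. So there is no in-paper argument to compare against.

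Your proposed proof is correct and self-contained. The CRT decomposition is the natural tool, and the key local computation---that the only periodic elements of $\Z/p^e\Z$ are $0$ and the units---is exactly right: if $x$ is a nonzero non-unit with $x(x^k-1)=0$, then $x^k-1\equiv -1\pmod p$ is a unit, forcing $x=0$. One small point to make explicit in (A): when you pass from ``$a$ is a cycle element'' to ``$a$ is a cycle element of $C_I$,'' you are implicitly using the description of $C_I$ given in the immediately preceding cited proposition (elements of $C_I$ have non-unit coordinates at positions in $I$ and unit coordinates elsewhere). Intersecting that with the global cycle condition ``each coordinate is $0$ or a unit'' is what forces zeros exactly at $I$ and units exactly off $I$. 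Once that is said, (B) and (C) follow as you outline; for (C), note that $(d_Iu)^k=d_Iu^k=d_I$ whenever $k$ is a multiple of the order of the image of $u$ in $\prod_{j\notin I}(\Z/p_j^{e_j}\Z)^\times$, so every element of $d_IU$ reaches $d_I$ along its own orbit.
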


Let $a, a^2, \ldots, a^k = d$ be the sequence of distinct powers of $a \in \Z/ m\Z$. Observe that if $a$ is a cycle element, then $a^{k+1} = a$. Whenever $a$ is in a cycle, the above propositions imply that $a \in d_IU$, that is, $a = d_I u$ for some $u \in U$. The tail elements cannot be determined using $d_I$, but can be expressed in terms of a similar element.

\begin{definition} \cite{ish_comps}
	Let $I = \{i_1, \ldots, i_s\} \subseteq R$. The \emph{multiplier} corresponding to set $I$ is $\pi_I = p_{i_1} \cdots p_{i_s}$.
\end{definition}

\begin{prop} \cite[Proposition 5.2]{ish_comps}
	Let $\pi_I $ be the multiplier corresponding to set $I \subseteq R$. The elements of $C_I$ are the integers $\pi_I x$, where gcd$(x, \frac{m}{g_I})=1$. 
\end{prop}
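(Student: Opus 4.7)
The plan is to characterize membership in $C_I$ via the Chinese Remainder Theorem decomposition $\Z/m\Z \cong \prod_{j \in R} \Z/p_j^{e_j}\Z$, and then translate the resulting prime-by-prime divisibility conditions into the single statement that $a = \pi_I x$ with $\gcd(x, m/g_I) = 1$.

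First, I would use the propositions cited just above: $a$ lies in $C_I$ if and only if some power $a^k$ lies in the cycle of $C_I$, which equals $d_I U$ where $U = (\Z/m\Z)^\times$. Recalling from Theorem \ref{idem_struc} that $d_I \equiv 0 \pmod{p_j^{e_j}}$ for $j \in I$ and $d_I \equiv 1 \pmod{p_j^{e_j}}$ for $j \in R \setminus I$, the CRT image of $d_I u$ for $u \in U$ is $(0, \ldots, 0, u_{i_{s+1}}, \ldots, u_{i_r})$ where the zeros occur at coordinates indexed by $I$ and the other coordinates are units modulo the corresponding $p_j^{e_j}$. Thus $a \in C_I$ if and only if, writing $a = (a_1, \ldots, a_r)$ under CRT, there exists $k$ with $a_j^k \equiv 0 \pmod{p_j^{e_j}}$ for every $j \in I$ and $a_j^k$ a unit for every $j \in R \setminus I$.

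Next, I would translate these two conditions into prime-level divisibility statements. For $j \in R \setminus I$, the power of a unit is a unit and the power of a non-unit is a non-unit in $\Z/p_j^{e_j}\Z$, so the condition is simply $\gcd(a_j, p_j) = 1$. For $j \in I$, the condition that some power of $a_j$ is $\equiv 0 \pmod{p_j^{e_j}}$ is equivalent to $p_j \mid a_j$, since in $\Z/p_j^{e_j}\Z$ the elements whose powers eventually hit $0$ are exactly the nilpotents, which are the multiples of $p_j$. Combining these conditions gives: $a \in C_I$ if and only if $p_j \mid a$ for every $j \in I$ and $\gcd(a, p_j) = 1$ for every $j \in R \setminus I$.

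Finally I would verify that this divisibility characterization is equivalent to the form $a = \pi_I x$ with $\gcd(x, m/g_I) = 1$. The forward direction writes $a = \pi_I x$ with $x = a/\pi_I$ and checks that $\gcd(x, m/g_I) = 1$ follows from $\gcd(a, p_j) = 1$ for $j \in R \setminus I$ together with the fact that $\pi_I$ and $m/g_I$ share no prime factors. The reverse direction observes that $\pi_I \mid \pi_I x$ gives $p_j \mid a$ for $j \in I$, and $\gcd(\pi_I x, m/g_I) = \gcd(x, m/g_I) = 1$ (since $\gcd(\pi_I, m/g_I) = 1$) gives the coprimality on $R \setminus I$. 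The main obstacle is not conceptual but notational, namely keeping the distinction between $\pi_I = \prod_{j \in I} p_j$ and $g_I = \prod_{j \in I} p_j^{e_j}$ straight while verifying that the two coprimality conditions really do match up; once CRT is in place, the rest is bookkeeping.
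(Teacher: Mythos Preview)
The paper does not prove this proposition; it is quoted from \cite{ish_comps} (as Proposition~5.2 there) and stated without proof in the present paper. So there is no in-paper argument to compare your proposal against.

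That said, your argument is sound. The CRT reduction to the coordinate-wise conditions ``$p_j\mid a$ for $j\in I$ and $p_j\nmid a$ for $j\in R\setminus I$'' is exactly the right characterization of ``some power of $a$ equals $d_I$ times a unit,'' using that in $\Z/p_j^{e_j}\Z$ the eventually-zero elements are precisely the multiples of $p_j$ and the eventually-unit elements are precisely the units. The translation to $a=\pi_I x$ with $\gcd(x,m/g_I)=1$ is also correct; the only point worth making explicit is well-definedness: the representation $a\equiv\pi_I x\pmod m$ determines $x$ only modulo $m/\pi_I$, but since $m/g_I$ divides $m/\pi_I$, the condition $\gcd(x,m/g_I)=1$ is independent of the choice of representative $x$. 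With that remark added, your proof is complete.
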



\subsection{An efficient algorithm for modular exponentiation}
\label{alg_sect}
First, suppose that $u \in U = \left(\Z/ m\Z\right)^\times$ and we wish to compute $u^e \pmod{m}$. We give an algorithm for computing $u^e$ using the structure of the idempotents from Section \ref{idem_section_1} which is a reformulation of the Chinese Remainder Theorem.  In particular, the following theorem shows a connection between primitive orthogonal idempotents modulo $m$ and the Chinese Remainder Theorem. This connection is known, but not widely; see \cite{porubsky} for a history on the relation between idempotents and the Chinese Remainder Theorem. Further, this theorem is true in general for any $b \in \Z / m\Z$, however we state it just for the units in $\Z/ m\Z$ to make a distinction between this and the next theorem.
\begin{theorem} \cite{porubsky} \label{crt_alg}
	Let $u\in U$, the set of units, and let $d_i = d_{R \setminus \{i\}}$ be the top-level idempotents. Let $e \in \N$. Then 
	$$
	u^e \equiv \sum_{i=1}^{r} d_i u^{e \pmod{\phi(p_i^{e_i})}} \pmod{m}.	
	$$
\end{theorem}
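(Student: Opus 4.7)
The plan is to prove this by showing that the sum has the correct reduction modulo each prime power $p_j^{e_j}$ dividing $m$, and then invoking the Chinese Remainder Theorem, which is equivalent to directly replacing $u^{e \pmod{\phi(p_i^{e_i})}}$ by $u^e$ inside the sum with the help of Proposition \ref{idem_sum_1}.

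First, I would fix an index $i$ and observe that by the structure theorem (Theorem \ref{idem_struc}), $d_i = d_{R \setminus \{i\}}$ is divisible by $\prod_{j \ne i} p_j^{e_j}$ in $\Z / m\Z$, while $d_i \equiv 1 \pmod{p_i^{e_i}}$. Since $u \in U$, $u$ is coprime to $p_i^{e_i}$, so Euler's theorem gives $u^{\phi(p_i^{e_i})} \equiv 1 \pmod{p_i^{e_i}}$, hence
\[
u^{e} \equiv u^{e \pmod{\phi(p_i^{e_i})}} \pmod{p_i^{e_i}}.
\]
Thus the difference $u^{e} - u^{e \pmod{\phi(p_i^{e_i})}}$ is divisible by $p_i^{e_i}$, and multiplying by $d_i$ (which supplies the remaining prime power factors) yields
\[
d_i \, u^{e \pmod{\phi(p_i^{e_i})}} \equiv d_i \, u^{e} \pmod{m}.
\]

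Summing this congruence over $i = 1, \ldots, r$ and factoring out $u^e$ gives
\[
\sum_{i=1}^{r} d_i \, u^{e \pmod{\phi(p_i^{e_i})}} \equiv u^{e} \sum_{i=1}^{r} d_i \pmod{m},
\]
and then applying Proposition \ref{idem_sum_1}, which states $\sum_{i=1}^r d_{R \setminus \{i\}} \equiv 1 \pmod{m}$, collapses the right-hand side to $u^e$.

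There is no real obstacle here: the argument is essentially bookkeeping that combines Euler's theorem on each factor $p_i^{e_i}$ with the two key facts about top-level idempotents, namely their divisibility pattern from the structure theorem and their additive completeness from Proposition \ref{idem_sum_1}. The only point requiring a bit of care is ensuring that the reduction $e \pmod{\phi(p_i^{e_i})}$ is justified coordinatewise, which is exactly where the hypothesis $u \in U$ (so that $\gcd(u, p_i^{e_i}) = 1$ for every $i$) is used; this explains the remark in the statement that the unit hypothesis will be dropped in the next theorem, where a different reduction exponent must be used to handle zero divisors.
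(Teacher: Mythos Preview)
Your proof is correct. The paper does not actually supply its own proof of this theorem (it is cited from \cite{porubsky}), but your argument---reducing each exponent modulo $\phi(p_i^{e_i})$ via Euler's theorem, using the divisibility pattern of $d_i$ from Theorem~\ref{idem_struc} to lift the congruence to modulus $m$, and then collapsing the sum via Proposition~\ref{idem_sum_1}---is exactly the template the paper uses in its proof of the generalization Theorem~\ref{general_alg_cyc}.
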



We now generalize the algorithm to compute $b^e \pmod{m}$ for any $b \in C_I$ where $|I|< r-1$. We note that this algorithm does not apply when computing $b^e \pmod{m}$ for $C_I$ where $|I| = r-1$. That is, when the idempotent $d_I$ is a top-level idempotent. We introduce two theorems that are reductions of the previous theorem. We provide proofs of these theorems that rely on an identity from Section \ref{gen_id_add} and results from Section \ref{comps_sect}. To the best of our knowledge, these reductions are not used in practice.

\begin{theorem} \label{general_alg_cyc}
	Let $b \in C_I$ so that $b = b^k$ for some $k > 1$. In other words, $d_I b = b$. Let $T = I$ and $S = R$. Let $d_i = d_{S \setminus \{i\}}$. Then
	$$
	b^e \equiv \sum_{i \in S \setminus T} d_i b^{e \pmod{\phi(p_i^{e_i})}} \pmod{m}.
	$$
\end{theorem}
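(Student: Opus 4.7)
The plan is to combine the cycle structure description from Section \ref{comps_sect} with the generalized idempotent sum identity (Proposition \ref{sum_idems_gen}) to reduce the computation of $b^e$ modulo $m$ to computations modulo each $p_i^{e_i}$ for $i \in S \setminus T$, at which point Euler's theorem will finish the job.

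First I would use the fact that $b$ is a cycle element of $C_I$ to write $b \equiv d_I b \pmod{m}$. This is immediate from the hypothesis $b = b^k$ with $k>1$, since any such $b$ satisfies $d_I b \equiv b$ (equivalently, $b$ lies in the subgroup $d_I U$ described in \cite[Theorem 5.21]{ish_comps}). Next, I would apply Proposition \ref{sum_idems_gen} with $J = T = I$ and $S = R$, which yields
\[
d_T \equiv \sum_{i \in S \setminus T} d_{S \setminus \{i\}} \pmod{g_S} = \pmod{m},
\]
so that
\[
b^e \equiv d_I b^e \equiv \sum_{i \in S \setminus T} d_{S \setminus \{i\}} b^e \pmod{m}.
\]

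The core step is then to show, for each fixed $i \in S \setminus T$, that
\[
d_{S \setminus \{i\}}\, b^e \equiv d_{S \setminus \{i\}}\, b^{e \bmod \phi(p_i^{e_i})} \pmod{m}.
\]
To verify this congruence, I would check it on each prime-power factor of $m$ separately. For $j \neq i$ we have $p_j^{e_j} \mid g_{S \setminus \{i\}} \mid d_{S \setminus \{i\}}$, so both sides vanish mod $p_j^{e_j}$. For the factor $p_i^{e_i}$, I would use that $d_{S \setminus \{i\}} \equiv 1 \pmod{p_i^{e_i}}$ by Theorem \ref{idem_struc}, so the factor $d_{S \setminus \{i\}}$ may be dropped and what remains is the statement that $b^e \equiv b^{e \bmod \phi(p_i^{e_i})} \pmod{p_i^{e_i}}$. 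This is Euler's theorem, provided $b$ is a unit modulo $p_i^{e_i}$.

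The main obstacle, and the only nontrivial point, is verifying that $b$ is indeed a unit modulo $p_i^{e_i}$ for every $i \in S \setminus T = R \setminus I$. I would argue as follows: since $b$ lies on a cycle of $C_I$, we have $b = d_I u$ for some $u \in U = (\Z/m\Z)^\times$ by \cite[Theorem 5.21]{ish_comps}. For $i \notin I$, Theorem \ref{idem_struc} gives $d_I \equiv 1 \pmod{p_i^{e_i}}$, whence $b \equiv u \pmod{p_i^{e_i}}$. Since $u$ is a unit in $\Z/m\Z$, it is a unit modulo $p_i^{e_i}$, and so is $b$. With this in hand, Euler's theorem applies and the proof is complete.
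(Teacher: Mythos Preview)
Your proof is correct and follows essentially the same route as the paper: decompose $d_T$ via Proposition~\ref{sum_idems_gen}, multiply through by $b^e$, and then use the structure of $d_{S\setminus\{i\}}$ modulo each prime power to reduce the exponent. If anything, you are more careful than the paper in explicitly verifying that $b$ is a unit modulo $p_i^{e_i}$ for $i\in R\setminus I$ (via $b=d_I u$ with $u\in U$), a step the paper leaves implicit.
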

\begin{proof}
	Since $S = R$, then $g_S = m$. By Proposition \ref{sum_idems_gen},
	$$
	d_T \equiv \sum_{i \in S \setminus T} d_i \pmod{m}.
	$$
	Therefore,
	$$
	b^e \equiv d_T b^e \equiv \sum_{i \in S \setminus T} d_i b^e \pmod{m}.
	$$
	Since each $d_i$ contains all primes in the factorization of $\frac{m}{p_i^{e_i}}$, then $d_i \equiv 0 \pmod{p_j^{e_j}}$ for all $j \ne i$ and $d_i \equiv 1 \pmod{p_i^{e_i}}$ by Theorem \ref{idem_struc}. Thus $d_ib^e \equiv d_i b^{e \pmod{\phi(p_i^{e_i})}} \pmod{m}$ for each $i \in S \setminus T$. 
\end{proof}

The previous theorem only holds when $b$ is in the cycle of $C_I$. The benefit to this previous theorem is it has no restrictions on the exponent $e$. In the following, we show that the same result holds when $b$ is not in the cycle, but only when $e$ is large enough.

\begin{theorem} \label{general_alg_non_cyc}
	Set $S = R$. Let $T\subseteq S$. Let $b\in C_{T}$, that is, let $b = \pi_T x$ for some $x$ relatively prime to $\frac{m}{g_T}$.  Let $d_{j} = d_{S\setminus \{j\}} $ be the top-level idempotents, for $1 \le j \le r$. Let $e \geq \max(e_1, \ldots, e_r)$. Then 
	$$
	b^e \equiv \sum_{i \in S \setminus T} d_i b^{e \pmod{\phi(p_i^{e_i})}} \pmod{m}.	
	$$
\end{theorem}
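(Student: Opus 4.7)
The plan is to verify the congruence by checking it modulo each prime power $p_k^{e_k}$ dividing $m$ and then invoking the Chinese Remainder Theorem. Because $d_i = d_{S \setminus \{i\}}$ satisfies $d_i \equiv 1 \pmod{p_i^{e_i}}$ and $d_i \equiv 0 \pmod{p_j^{e_j}}$ for $j \neq i$ by Theorem \ref{idem_struc}, the sum on the right collapses to a single nonzero summand modulo any $p_k^{e_k}$ with $k \in S \setminus T$, and it vanishes entirely modulo $p_k^{e_k}$ when $k \in T$. So the verification naturally splits into two cases based on whether $k$ lies in $T$ or in $S \setminus T$.

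For $k \in S \setminus T$, only the term $i = k$ survives mod $p_k^{e_k}$, giving a right-hand side of $b^{e \pmod{\phi(p_k^{e_k})}} \pmod{p_k^{e_k}}$. On the left, $b = \pi_T x$ with $p_k \nmid \pi_T$ and $\gcd(x, m/g_T) = 1$, hence $\gcd(b, p_k^{e_k}) = 1$; Euler's theorem then gives $b^e \equiv b^{e \pmod{\phi(p_k^{e_k})}} \pmod{p_k^{e_k}}$, matching the right side. For $k \in T$, every $d_i$ appearing in the sum has $i \neq k$, so each summand is $0$ mod $p_k^{e_k}$; on the left, $p_k \mid \pi_T \mid b$, so $b^e$ is divisible by $p_k^e$, and the hypothesis $e \geq \max(e_1, \ldots, e_r) \geq e_k$ forces $b^e \equiv 0 \pmod{p_k^{e_k}}$, again matching.

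The main obstacle is conceptual rather than computational: unlike in Theorem \ref{general_alg_cyc}, the element $b$ is no longer idempotent-fixed ($d_T b \not\equiv b$ in general when $b$ is on the tail), so we cannot open the proof by writing $b = d_T b$ and then expanding $d_T \equiv \sum_{i \in S \setminus T} d_i \pmod{m}$ via Proposition \ref{sum_idems_gen}. The bound $e \geq \max(e_i)$ is exactly what is needed to recover the same effect: once $e$ is at least each $e_i$ for $i \in T$, the power $b^e$ is divisible by $g_T$, so it behaves on the $T$-components as though it had been multiplied by $d_T$. This is the single place where the hypothesis on $e$ enters, and it cannot be weakened: if some $e_i$ exceeds $e$, then $b^e$ retains a nonzero residue mod $p_i^{e_i}$ that the right-hand side is structurally unable to reproduce.

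After these two case checks, CRT assembles the congruences modulo each $p_k^{e_k}$ into a single congruence modulo $m = g_{S}$, finishing the proof. The proof mirrors that of Theorem \ref{general_alg_cyc} in shape, but the argument for indices $k \in T$ is replaced by the divisibility observation rather than the identity $d_T b = b$.
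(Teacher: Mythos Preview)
Your proof is correct. Every step checks out: for $k \in S\setminus T$ you correctly argue that $\gcd(b,p_k^{e_k})=1$ and invoke Euler, and for $k\in T$ you correctly use $e\ge e_k$ to kill both sides modulo $p_k^{e_k}$; CRT then reassembles the result.

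However, your route differs from the paper's. You perform a prime-by-prime verification via the Chinese Remainder Theorem, treating the two cases $k\in T$ and $k\in S\setminus T$ separately. The paper instead observes that the hypothesis $e\ge\max(e_1,\dots,e_r)$ forces $g_T\mid \pi_T^{\,e}\mid b^e$, whence $d_T b^e\equiv b^e\pmod m$; once $b^e$ is fixed by $d_T$, the entire statement follows immediately from Theorem~\ref{general_alg_cyc}. In other words, the paper sidesteps exactly the obstacle you flag (``$d_T b\not\equiv b$ in general'') by applying the idempotent to $b^e$ rather than to $b$, and then reuses the cycle case wholesale. Your approach is more self-contained and makes the role of Euler's theorem explicit at each prime; the paper's is shorter and emphasizes that the non-cycle case reduces to the cycle case once the exponent is large enough.
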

\begin{proof}
	Since $e \geq \text{max}(e_1, \ldots, e_r)$ and $b^e = \pi_T^e x^e$, then $d_T | \pi^e$.  Thus $d_T | b^e$ and so $d_T b^e \equiv b^e \pmod{m}$. The result follows by applying Theorem \ref{general_alg_cyc}.
\end{proof}
\begin{remark}
	Note that we may have $e \pmod{\phi(p_i^{e_i})} = 0$. In this case, $b^0$ is considered as an element in $d_i U$. Since the identity of $d_iU$ is $d_i$, then here $b^0 = d_i$. 
\end{remark}

\begin{remark}
	We have throughout used the Euler totient function. We can, however, use the Carmichael totient function for a tighter calculation. 
\end{remark}

\section{Acknowledgements}
A portion of this work was performed at the Ultrascale Systems Research Center (USRC) at Los Alamos National Laboratory, supported by the U.S. Department of Energy contract DE-FC02-06ER25750. The first author was supported in part by an appointment with the National Science Foundation (NSF) Mathematical Sciences Graduate Internship (MSGI) Program sponsored by the NSF Division of Mathematical Sciences. 

\bibliographystyle{hplain}
\bibliography{./biblio}

\end{document}